\numberwithin{equation}{section} 
\def\<{\langle}             \def\>{\rangle}
\newtheorem{thm}{Theorem}[section]
\newtheorem{coro}[thm]{Corollary}
\newtheorem{lem}[thm]{Lemma}
\newtheorem{proposition}[thm]{Proposition}
\newtheorem{prop}[thm]{Proposition}
\theoremstyle{definition}
\newtheorem{rem}{Remark}[section]
\newcommand{\beeq}{\begin{equation}}\newcommand{\eneq}{\end{equation}}
\newcommand{\al}{\alpha}    \newcommand{\be}{\beta}
\newcommand{\de}{\delta}
\newcommand{\om}{\omega}    
\newcommand{\R}{\mathbb{R}}\newcommand{\Z}{\mathbb{Z}}
\newcommand{\ep}{\varepsilon}
\def \endprf{\hfill  {\vrule height6pt width6pt depth0pt}\medskip}
\newcommand{\pt}{\partial_t}\newcommand{\pa}{\partial}
\newcommand{\les}{{\lesssim}}
\newcommand{\supp}{\,\mathop{\!\mathrm{supp}}}
\newcommand{\gm}{\mathfrak{g}} 
\newcommand{\la}{\lambda}
\numberwithin{equation}{section}
\title[Semilinear wave equations with mixed nonlinearities]
      {Blow up for small-amplitude semilinear wave equations with mixed nonlinearities on asymptotically Euclidean manifolds}
\author{Mengyun Liu}
\address{Department of Mathematics\\                	
Zhejiang Sci-Tech University\\                Hangzhou 310018, P. R. China}
\email{mengyunliu@zstu.edu.cn}
\author{Chengbo Wang}\address{School of Mathematical Sciences\\                Zhejiang University\\                Hangzhou 310027, P. R. China}\email{wangcbo@zju.edu.cn}
\urladdr{http://www.math.zju.edu.cn/wang}
\thanks{The authors were supported by NSFC 11971428.}
\date{\today}
\dedicatory{} \commby{}
\begin{document}
\begin{abstract}
In this work, we investigate the problem of finite time blow up as well as the upper bound estimates of lifespan for solutions to small-amplitude semilinear wave equations with mixed nonlinearities $c_1 |u_t|^p+c_2 |u|^q$, posed on asymptotically Euclidean manifolds,
 which 
is related to both the Strauss conjecture and Glassey conjecture. In some cases, we obtain existence results, where the lower bound of the lifespan agrees with the upper bound in order.
In addition, 
our results apply for semilinear damped wave equations,
when the coefficient of the dissipation term is integrable (without sign condition) and space-independent.
 \end{abstract}

\keywords{blow up, Glassey conjecture, Strauss conjecture, lifespan, critical exponent, asymptotically Euclidean manifolds}

\subjclass[2010]{
58J45, 58J05, 35L71, 35B40, 35B33, 35B44, 35B09, 35L05}

\maketitle

\section{Introduction}

Let $(\R^{n}, \gm)$ be a asymptotically Euclidean (Riemannian) manifold, with $n\ge 2$. 
By asymptotically Euclidean, we mean that
$(\R^n, \gm)$ is certain perturbation of the Euclidean space $(\R^n, \gm_0)$. More precisely, we assume
$\gm$ can be decomposed as 
\begin{align}
\label{dl1}
\gm=\gm_{1}+\gm_{2}\ ,
\end{align}
where $\gm_{1}$ is a spherical symmetric, long range perturbation of $\gm_0$, and $\gm_{2}$ is a short range perturbation.
By definition, there exists polar coordinates $(r,\omega)$ for $(\R^n,\gm_1)$, in which we can write
\beeq
\label{dl3}
\gm_{1}=K^{2}(r)dr^{2}+r^{2}d\omega^{2}\ , 
\eneq
where $d\om^2$ is the standard metric on the unit sphere $\mathbb{S}^{n-1}$, and
\beeq
\label{dl2}
|\pa^{k}_{r}(K-1)|\les  \<r\>^{-k-\rho}
,  k=0, 1, 2, 
\eneq
for some given positive constant $\rho$. Here, 
$\langle x\rangle=\sqrt{1+|x|^2}$, and
we use $A\les B$ to stand for $A\leq CB$ for some constant $C>0$. 
Equipped with the coordinates $x=r\omega$, we have
$$\gm=g_{jk}(x)dx^j dx^k\equiv \sum^{n}_{j,k=1}g_{jk}(x)dx^j dx^k\ ,\ \gm_2=g_{2, jk}(x)dx^j dx^k\ ,$$
where we have used the convention that Latin indices $j$, $k$ range from $1$ to $n$ and the Einstein summation convention for repeated indices. 
Concerning $\gm_2$, we assume
it satisfies
\beeq
\label{eq-g2-ae2}
\nabla^\be g_{2,jk}=\mathcal{O}(\<r\>^{-\rho-1-|\be|}), |\be|\le 2\ .
\eneq
By these assumptions, it is clear that 
there exists a constant $\delta_{0}\in(0, 1)$ such that 
\beeq
\label{unelp}
\delta_{0}|\xi|^2\le
g^{jk}(x)\xi_{j}\xi_{k}\le
\delta_{0}^{-1} |\xi|^2, \forall \ x, \xi\in\R^{n},\ 
K\in (\delta_{0}, 1/\delta_{0})
\ ,
\eneq
where
$(g^{jk}(x))$ denotes the inverse of $(g_{jk}(x))$.

In this paper, we are interested in the investigation of the blow up part of the analogs of the Glassey conjecture and related problems on asymptotically Euclidean manifolds.
More precisely, we will study the blow up of solutions for 
the following semilinear wave equations with small initial data,
  posed on asypmtotically Euclidean manifolds \eqref{dl1}-\eqref{eq-g2-ae2}
\beeq
\label{nlw}
\begin{cases}
\pa^{2}_{t}u-\Delta_{\gm}u=c_1 |u_{t}|^{p}+ c_2 |u|^q\\
u(0,x)=\ep u_0(x), u_{t}(0,x)=\ep u_1(x)\ ,
\end{cases}
\eneq
where, 
$\Delta_{\gm} =\nabla^j\partial_{j}$ is the standard Laplace-Beltrami operator,  $p, q>1$, $c_1, c_2\ge 0$ and $\ep>0$ is a small parameter. Concerning the initial data, we assume 
\beeq
\label{hs2}
0\leq u_0, u_1 \in C^{\infty}_{0}(\R^{n}),\quad \supp(u_0, u_1)\subset\{x\in \R^{n}; |x|\leq R_{0}\}\ , 
\eneq
 for some $R_{0}>0$.

 Before stating our results, let us briefly review the history of this problem. When $c_1=0<c_2, \gm=\gm_0$, the problem is related to the Strauss conjecture, where the critical power is given by the Strauss exponent $p_S(n)$, where $p_S$ is the positive root of the equation:
$$(n-1)q^{2}-(n+1)q-2=0
\ .$$
When there is no global solutions, and initial data are nontrivial and nonnegative, it has been proved that the upper bound of the lifespan is
 \beeq\label{eq-life}
T_\ep\leq S_{\ep}:=\left\{
\begin{array}{ll }
  C_0 \ep^{\frac{2q(q-1)}{(n-1)q^{2} - (n+1)q - 2}}    & 1<q<p_S(n);   \\
\exp(C_0\ep^{-q(q-1)})      &   q=p_S(n).
\end{array}\right.
\eneq
In addition, when $n=2$, $1<q<2$ and
$u_1\neq 0$,  the upper bound of the lifespan can be improved to 
\beeq
\label{eq-life-special}
T_\ep\leq S_\ep^1= C_0 \ep^{-\frac{q-1}{3-q}}, \ n=2, 1<q<2\ ,
 \eneq
for some constant $C_0>0$. We refer
\cite{LW2019} for discussion of the history and related results.
 
 When $\gm = \gm_{0}$ and $c_1\neq 0=c_2$, the problem of determining the sharp range of powers of $p>1$ for global existence versus blow up for arbitrary small initial data, is known as the Glassey conjecture, where the critical power $p$ for \eqref{nlw}
is conjectured to be
$$p_G(n):=1 + \frac{2}{n-1}\ .$$
In the important particular case of $p=2$, the problem with $n=3$ has been well known to be critical, which admits almost global solutions in general, with the estimates of the lifespan, denoted by $T_\ep$, given by $\ln T_\ep\sim \ep^{-1}$, see
John \cite{FJ1981} and
John-Klainerman \cite{JK84}.
In general, the global existence with $p>p_G(n)$ is known for $n=2,3$ through the works of
Sideris \cite{sideris},  Hidano-Tsutaya \cite{hk} and Tzvetkov \cite{nt}, while 
nonexistence of global solutions for $1<p\le p_G(n)$ and any $n\ge 2$ as well as the upper bound of lifespan,
\beeq
\label{main-es}
T_{\ep}\leq G_{\ep}:=
\begin{cases}
C_{0}\ep^{-\frac{2(p-1)}{2-(n-1)(p-1)}}, \ 1<p<p_{G}\ ,\\
\exp(C_{0}\ep^{-(p-1)}), \ p=p_{G}\ ,
\end{cases}
\eneq
has also been well-known (at least for the case $u_1\neq 0$), see Zhou \cite{zhou} and references therein.
The high dimensional existence part remains open, except for the radial case,
see Hidano-Wang-Yokoyama \cite{hyw}. 
The upper bound \eqref{main-es} is also known to be sharp in general, at least for radial data, see Hidano-Jiang-Wang-Lee \cite{HJWLax} and references therein.

 When $\gm=\gm_0$ and $c_1c_2\neq 0$, the problem
 is related to both the Glassey conjecture and the Strauss conjecture. It turns out that a new critical curve occur in the expected region of global existence $p>p_G(n)$, and $q>p_S(n)$, that is
 $$q>p_{S}, p>p_G, (q-1)((n-1)p-2)=4\ ,$$
 where the critical and super-critical case is known to admit global existence, at least for $n=2, 3$, see Hidano-Wang-Yokoyama \cite{HWY}, while there is non-existence of global existence for the sub-critical case, as well as 
an upper bound of the lifespan 
  \beeq
 \label{mixed-lifespan1}
  T^{p,q}_{\ep}\leq Z_{\ep}:=C_0 \ep^{-\frac{2p(q-1)}{2q+2-(n-1)p(q-1)}},
  \eneq
  in the region
\beeq
\label{mixed-region1} (q-1)((n-1)p-2)<4,
   p\leq\frac{2n}{n-1},  q< \frac{2n}{n-2}\ , \eneq
   when $u_1\neq 0$,
  see Han-Zhou \cite{HZ2014}. Note that,  in the recent work of Lai-Takamura \cite{LTmix}, they remove the restriction for $p$ and obtain the upper bound when $p>\frac{2n}{n-1}$ and
$u_1\neq 0$: 
\beeq
 \label{mixed-lifespan2}
  T^{p,q}_{\ep}\leq C_0 \ep^{-\frac{(q-1)}{q+1-n(q-1)}}, p>\frac{2n}{n-1},  q< \frac{n+1}{n-1}\ .
 \eneq
In fact, in this case, since we have $q<\frac{n+1}{n-1}<p_{S}$, we could compare the upper bound with $S_\ep$ in \eqref{eq-life} and find that \eqref{mixed-lifespan2} improves the upper bound exactly when $n=2$, $1<q<2$, that is \eqref{eq-life-special}. 
Moreover, we find that the additional restriction for $q$ in \eqref{mixed-region1}, that is, $q<\frac{2n}{n-2}$ is also not necessary, except $q<1+\frac{4}{n-3}$ implied by the first inequality, by recasting the proof. 

Notice that the results
 \eqref{eq-life},
 \eqref{eq-life-special} and
 \eqref{main-es} apply also in the case of $c_1c_2\neq 0$, by comparing $S_\ep$, $G_\ep$ with $Z_\ep$, when $u_1\neq 0$, we have 
  \beeq
 \label{zonghelife}
 T^{p,q}_{\ep}\les
 \begin{cases}
 S_{\ep},\ \ \  q\leq p_S, p\geq q\\
 Z_{\ep}, \ \ \ p\leq q\leq 2p-1, (q-1)((n-1)p-2)<4\\
 G_\ep, \ \ \ p\leq p_G, \ q\geq 2p-1\\
   S^1_{\ep},\ \ \  q\in (1,2), p\ge 2\frac{q+1}{5-q}, n=2
 \end{cases}
 \eneq
 See Figure \ref{tu} for the illustration of lifespan estimate of $T^{p,q}_\ep$ in blows up region. Concerning the sharpness of the upper bound of the lifespan, at least for $n=2,3, 4$, $q,p\ge 2$ and $q>2/(n-1)$, see
\cite{HWY},
Wang-Zhou \cite{WZ18} and
 Dai-Fang-Wang \cite{DFW}.

 \begin{figure}
\centering
\begin{tikzpicture} 
\draw[thick] (2.414,2.414)--(2.414,4) (3,2)--(5,2);
\draw[thick, dashed, domain=2.414:3] plot (\x, {{1+2/((\x)-1)}});
\node[right] at (2.6,2.3) {\tiny $(q-1)((n-1)p-2)=4$};
\node[right] at (1.5,2.5) {$S_{\ep}$};
\node[right] at (3,1.5) {$G_{\ep}$};
\draw[fill=black,line width=1pt] (2.414,2.414) circle[radius=0.3mm];
\draw[fill=black,line width=1pt] (3,2) circle[radius=0.3mm];
\node[below left] at (1,1) {\tiny $(1,1)$};
\node[left] at (1,2.414) {$p_S$};
\node[right] at (2,1.9) {$Z_{\ep}$};
\node[left] at (1,2) {$p_G$};
\node[below] at (2.414,1) {\small $p_S$};
\node[below] at (3.5,1) {\tiny $1+4/(n-1)$};
	\draw[thick] (1,1)--(2.414,2.414) (1,1)--(3,2);
	\draw[thick,-stealth] (1,1)--(1,4) node[left]{$p$};
	\draw[thick,-stealth] (1,1)--(5,1) node[below]{$q$};
\node[below] at (3,0.5) {\tiny $n\ge 3$};

\draw[thick] (8.414,2.414)--(8.414,4) (9,2)--(11,2);
\draw[thick, dashed, domain=8.414:9] plot (\x, {{1+2/((\x)-7)}});
\draw[thick](7,1) to[out=30,in=240] (7.5,1.5);
\node[right] at (8.6,2.3) {\tiny $(q-1)(p-2)=4$};
\node[right] at (7,2.5) {\small $S^1_{\ep}$};
\node[right] at (7.5,3.2) {$S_{\ep}$};
\node[right] at (9,1.5) {$G_{\ep}$};
\draw[fill=black,line width=1pt] (8.414,2.414) circle[radius=0.3mm];
\draw[fill=black,line width=1pt] (9,2) circle[radius=0.3mm];
\node[below left] at (7,1) {\tiny $(1,1)$};
\node[left] at (7,2.414) {$p_S$};
\node[right] at (8,1.9) {$Z_{\ep}$};
\node[left] at (7,2) {$p_G$};
\node[below] at (7.5,1) {\small $2$};
\node[below] at (8.414,1) {\small $p_S$};
\node[below] at (9,1) {\small $5$};
	\draw[thick] (7.5,1.5)--(8.414,2.414) (7,1)--(9,2) (7.5,1.5)--(7.5, 4);
	\draw[thick,-stealth] (7,1)--(7,4) node[left]{$p$};
	\draw[thick,-stealth] (7,1)--(11,1) node[below]{$q$};

\node[below] at (9,0.5) {\tiny $n=2$};
\end{tikzpicture}

 \caption{Upper bound of the lifespan $T^{p,q}_\ep$ when $u_1\neq 0$}
\label{tu}
\end{figure}
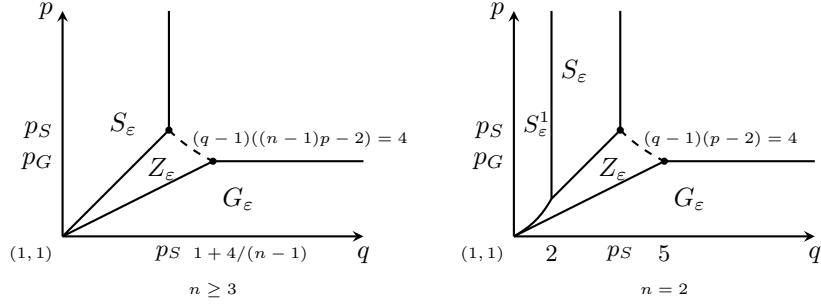

 In recent years, there have been many works concerning the analogs of the problem and related variants on asymptotically flat manifolds, including asymptotically Euclidean manifolds, black hole space-time.
 In general, we expect that the same critical power extends to asymptotically flat manifolds, at least when the manifolds enjoy certain geometric structure such as non-trapping.
 
 The existence part of the analogs of the Glassey conjecture for asymptotically Euclidean manifolds and  certain small space time perturbation of the flat metric 
  have been partly investigated by
 Sogge-Wang \cite{sw2010} ($n=3$, $p=p_G=2$) and
  the second author in \cite{W15, W15j}.
For exterior domain, see
Zhou-Han \cite{z-h2011} for blow up results and
the second author \cite{W15j} for existence with non-trapping obstacles. For the related works on black hole, Luk \cite{Luk} obtained the small data global existence on Kerr spacetime, when the nonlinear term satisfies the null condition.
Recently, Lai \cite{LLM} studied the blow up of Glassey conjecture on Schwarzschild spacetime for $1<p\leq 2=p_G(3)$.


A closely related problem to 
\eqref{nlw} is the following 
 \beeq
\label{nlw2}
\begin{cases}
\pa^{2}_{t}u-\Delta_{\gm}u+b(t)u_{t}=c_1 |u_{t}|^{p}+ c_2 |u|^q\\
u(0,x)=\ep u_0(x), u_{t}(0,x)=\ep u_1(x)\ ,
\end{cases}
\eneq where $b\in L^1$.
There have been some recent works concerning this problem, with typical damping coefficient $b(t)=\mu (1+t)^{-\be}$.
It turns out that the behavior of \eqref{nlw2} depends on $\mu$ and $\beta$. When $\beta >1$ and $\gm=\gm_0$, it is referred as ``scattering'' since the linear part of \eqref{nlw2} behaves like linear wave equations. In this case, when $c_1>c_2=0$,
Lai-Takamura \cite{LTgla} proved blow-up results as well as 
estimates of the lifespan \eqref{main-es},
when $\gm=\gm_0$, $1 <p \leq p_{G}$, $0\leq b(t)\in L^{1}$ and $u_1\neq 0$. Similarly, when $c_1c_2\neq 0$,
Lai-Takamura \cite{LTmix} obtained  \eqref{mixed-lifespan1}
for $\gm=\gm_0$, $u_1\neq 0$ and typical damping $b(t)=\mu (1+t)^{-\be}$, $\mu>0$, $\be>1$.

On the other hand,
the global existence for $p>p_{G}$ when $n=3$ and $n\ge 4$ with $\gm_2=0$ was obtained by Bai-Liu \cite{BL2019} when $\be>1$ and  $\mu\in \R$, posed on non-trapping asymptotically Euclidean manifolds \eqref{dl1}-\eqref{eq-g2-ae2}.
Notice that, the global existence part does not have the nonnegative assumption on $b(t)$, so it is natural to infer that the nonnegative assumption might not be necessary for the blow up results as well.
 
 In our recent work \cite{LW2019}, where we consider the finite time blows up of solutions to \eqref{nlw2} with $c_1=0<c_2$, we have succeeded in proving blow up results without the sign conditions on $b$, by applying a variable transform. Let
\beeq
 \label{bianhuan}
 m(t)=e^{\int^{t}_{0}b(\tau) d\tau}, \quad s=\int^{t}_{0}\frac{1}{m(\tau)}d\tau=h(t), t=\eta(s)\ ,
 \eneq
then \eqref{nlw2} becomes
$$\pa^{2}_{s}u-\tilde{m}^{2}(s)\Delta_{\gm}u=c_1 \tilde{m}^{2-p}(s)|u_{s}|^{p}
+c_2 \tilde{m}^{2}(s)|u|^{q}
, \quad \tilde{m}(s)=m(\eta(s))\ .
$$
Traditionally, we still use $t$ to denote time, and we are reduced to consider the following equation
\beeq
\label{bianhuan1}
\pa^{2}_{t}u-\tilde{m}^{2}(t)\Delta_{\gm}u=
c_1 \tilde{m}^{2-p}(t)|u_{t}|^{p}
+c_2 \tilde{m}^{2}(t)|u|^{q}, \quad \tilde{m}(t)=m(\eta(t))\ .
\eneq

Based on this transformation, as is typical for the test function method of proving blow up, 
one of the  key points is to find appropriate test function adapted for the problem.
For that purpose, 
we  state the main assumption that we shall make.
\\
{\bf Hypothesis:} 
There exist  $\la, c>0$ and corresponding nonnegative nontrivial entire solution $\phi$ to $\Delta_{\gm}\phi=\la^{2}\phi$ such that
\beeq\tag{H}
  \label{H1}
 0\leq\phi(x)\le c\< \la r\>^{-\frac{n-1}{2}}e^{\la \int^{r}_{0}K(\tau)d\tau}, \ \phi\neq 0\ .
 \eneq

Let us review some cases where the assumption \eqref{H1} is valid. It is valid on Euclidean space, $\gm=\gm_0$, where
$\phi$ with $\la>0$ could be given by spherical average of $e^{\la x\cdot\om}$,
$$\phi=\int_{\mathcal{S}^{n-1}}e^{\la x\cdot\omega}d\omega\ ,$$
see Yordanov-Zhang \cite{YorZh06}.
When $\gm_3$ is  an exponential perturbation, that is, there exists $\al>0$ so that
\beeq
\label{dlfjia}
|\nabla g_{3,jk}|+|g_{3,jk}|\les  e^{-\al\int^{r}_{0}K(\tau)d\tau}\ ,
\eneq
the assumption \eqref{H1} is recently verified for
 $\gm=\gm_1+\gm_3$ by the authors \cite{LW2019} with uniform positive lower bound of $\phi$, while the 
case
 $\gm=\gm_0+\gm_3$  
was obtained by
Wakasa and Yordanov in \cite{WaYo18-1pub}.

 The main result of this paper then states that there are no global solutions to
 \eqref{nlw} or \eqref{nlw2},
 for arbitrary small $\ep>0$, provided that
$p\le p_G$, or $q\leq p_S$, or 
\beeq\label{mixed}
(q-1)((n-1)p-2)<4,
 p>1,  q >1\ .\eneq
More precisely, we have
\begin{thm}
\label{main-a-g}
Let $n\geq 2$, $p, q>1$  and $b(t)\in L^{1}(\R_+)$. Assuming that \eqref{H1} is valid. Consider \eqref{nlw2} with $c_1, c_{2}\ge 0$ and nontrivial initial data \eqref{hs2}, posed on asymptotically Euclidean manifolds \eqref{dl1}-\eqref{eq-g2-ae2}. Suppose it has a weak solution $u\in C^{2}([0, T_\ep);\mathcal{D}'(\R^{n}))$ with $u_{t}\in L_{loc}^{p}([0, T_{\ep})\times \R^{n})$, $|u|^{q}\in C([0, T_\ep);\mathcal{D}'(\R^{n}))$ and
\beeq
\label{fsp}
\supp u\subset \{(t, x); \int^{r}_{0}K(\tau)d\tau\leq t+R_{1}\},
\eneq
for some $R_{1}\ge \int_{0}^{R_{0}}K(\tau)d\tau$. Then 
there exist constants $\ep_0>0$ and $C_{0}>0$, such that for any $\ep\in (0, \ep_{0})$, we have the following results on the upper bound of $T_\ep$:
\begin{enumerate}
  \item \eqref{main-es}, if $c_{1}>0$,  $1<p\leq p_{G}$,
  \item \eqref{mixed-lifespan1}, if $c_1c_2>0$ and \eqref{mixed},
  \item \eqref{eq-life},
 if $c_{2}>0$,  $1<q< p_{S}$\footnote{When $q=p_S$, the situation is more delicate. It can be shown that
 we still have \eqref{eq-life}, provided that
there exist $\la_0, c_1>0$ such that we have a class of solutions to $\Delta_{\gm}\phi_\la=\la^{2}\phi_\la$ satisfying
the uniform bound $$
c_{1} 
<\phi_\la(x) < c_1^{-1}\< \la r\>^{-\frac{n-1}{2}}e^{\la \int^{r}_{0}K(\tau)d\tau} \ ,
$$
 for any
$0<\la\le \la_0$.
 See \cite{LW2019} for a proof. 
},
\item \eqref{eq-life-special} if $c_{2}>0$, $n=2$,  $1<q<2$ and  $u_1\neq 0$.
\end{enumerate}
\end{thm}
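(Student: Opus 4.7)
The plan is to prove Theorem \ref{main-a-g} by the test function method, combining the Strauss-type argument of \cite{LW2019} with the Glassey-type argument of Zhou \cite{zhou} and Lai--Takamura \cite{LTmix}. First I apply the transformation \eqref{bianhuan} to reduce the damped equation \eqref{nlw2} to the form \eqref{bianhuan1}. Since $b\in L^{1}(\R_{+})$, the function $\tilde m(t)$ lies in a compact subinterval of $(0,\infty)$ and the change of variable $t\mapsto s$ is bilipschitz, so upper bounds on the lifespan are preserved up to a multiplicative constant. Using hypothesis \eqref{H1}, I take the eigenfunction $\phi$ with $\Delta_{\gm}\phi=\lambda^{2}\phi$ and build a test function $\psi(t,x)=\phi(x)\chi(t)$, where $\chi(t)>0$ is the decaying solution of $\chi''(t)=\lambda^{2}\tilde m^{2}(t)\chi(t)$; then $\psi$ is an exact solution of the linear equation $\psi_{tt}-\tilde m^{2}\Delta_{\gm}\psi=0$, and since $\tilde m$ is bounded away from $0$ and $\infty$, $\chi$ decays exponentially. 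From the upper bound on $\phi$ in \eqref{H1} and the finite-speed support condition \eqref{fsp}, the weight integral $\Phi(t):=\int_{\supp u(t,\cdot)}\phi(x)\chi(t)\,dx$ satisfies $\Phi(t)\lesssim \langle t\rangle^{(n-1)/2}$.

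The key step is the multiplier identity obtained by pairing \eqref{bianhuan1} with $\psi$ and integrating by parts in $x$. Setting $F(t):=\int u\psi\,dx$ and $F_{1}(t):=\int u_{t}\psi\,dx$, this produces a first-order ODE relating $F_{1}'$ to the two nonlinear integrals $c_{2}\tilde m^{2}\int|u|^{q}\psi\,dx$ and $c_{1}\tilde m^{2-p}\int|u_{t}|^{p}\psi\,dx$, together with the identity $F'(t)=F_{1}(t)+(\chi'/\chi)F(t)$. A reverse H\"older inequality on the support of $u(t,\cdot)$, combined with the polynomial bound on $\Phi$, converts the nonlinear integrals into lower bounds of the form $|F|^{q}\Phi(t)^{1-q}$ and $|F_{1}|^{p}\Phi(t)^{1-p}$, closing the system of differential inequalities for $F$ and $F_{1}$. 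The nontriviality of the initial data together with $u_{0},u_{1}\ge 0$ provides the positive lower bounds on $F$ and $F_{1}$ (at $t=0$ or on a short initial interval) needed to seed the iteration.

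Cases (1), (3), and (4) then reduce to a single-nonlinearity ODE blow-up argument. In (3) and (4), the $c_{1}|u_{t}|^{p}\ge 0$ term may be discarded, and the $F^{q}$ branch of the inequality reproduces the Strauss-type bounds \eqref{eq-life} and \eqref{eq-life-special} as in \cite{LW2019}; for (4), the requirement $u_{1}\not\equiv 0$ is used to guarantee a strictly positive contribution from the $t=0$ boundary term in $n=2$. In (1), the $F_{1}^{p}$ branch and the Glassey-type strategy yield \eqref{main-es}; the critical case $p=p_{G}$ requires the slicing refinement (cf.\ \cite{LTgla,zhou}) to promote polynomial blow-up to the exponential rate.

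The main obstacle is the mixed case (2). Here one must iterate the system using both branches of the nonlinear lower bound: each use of the $|F|^{q}\Phi^{1-q}$ inequality improves the polynomial growth of $F$ by a factor whose exponent depends on $q$ and $n$, while the $|F_{1}|^{p}\Phi^{1-p}$ step couples $F_{1}$ back to $F$ via $F'=F_{1}+(\chi'/\chi)F$ and contributes an additional factor depending on $p$. The resulting geometric series in the exponent converges precisely when the subcritical condition $(q-1)((n-1)p-2)<4$ in \eqref{mixed} holds, and when it does, the induced blow-up rate yields exactly the exponent $\frac{2p(q-1)}{2q+2-(n-1)p(q-1)}$ in \eqref{mixed-lifespan1}. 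The bookkeeping for this iteration, and the verification that it sharpens both (1) and (3) in the appropriate sub-region of the $(p,q)$-plane (cf.\ Figure \ref{tu}), is the most delicate part of the argument.
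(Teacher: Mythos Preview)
Your setup (the transformation \eqref{bianhuan}, the test function $\psi=\chi(t)\phi(x)$ with $\chi$ solving $\chi''=\lambda^{2}\tilde m^{2}\chi$, and the support estimate $\int_{D}\psi\lesssim\langle t\rangle^{(n-1)/2}$) matches the paper, and for case~(1) your outline is essentially the paper's argument. One minor point: no ``slicing refinement'' is needed at $p=p_{G}$; the scalar inequality $I'\gtrsim I^{p}(1+t)^{-1}$ integrates directly to the exponential lifespan bound.

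The real gap is in case~(2). You propose to iterate with the \emph{weighted} functionals $F=\int u\psi$ and $F_{1}=\int u_{t}\psi$, claiming that ``each use of the $|F|^{q}\Phi^{1-q}$ inequality improves the polynomial growth of $F$''. It does not. Eliminating $F_{1}$ from your system $F'+\nu F=F_{1}$, $F_{1}'+\nu F_{1}=\lambda^{2}\tilde m^{2}F+N$ (with $\nu=-\chi'/\chi\sim\lambda>0$) gives
\[
F''+2\nu F'+2(\nu^{2}-\lambda^{2}\tilde m^{2})F=N,
\]
and the damping term $2\nu F'$ kills the Kato mechanism: with $\nu$ bounded below, the integrating factor $e^{\int\nu}$ is exponentially large, so each integration contributes $O(1)$ rather than a $+1$ shift in the polynomial exponent. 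The iteration you describe therefore stalls and will not produce the exponent in \eqref{mixed-lifespan1}. The same objection applies to your handling of case~(3) if ``the $F^{q}$ branch'' is meant literally with the weighted $F$.

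What the paper actually does in case~(2) is a hybrid: it runs the Kato iteration on the \emph{unweighted} average $F_{0}(t)=\int u\,dv_{\gm}$, which satisfies the clean second-order inequality $F_{0}''\geq c_{2}\tilde m^{2}\int|u|^{q}\gtrsim F_{0}^{q}(1+t)^{-n(q-1)}$ because $\int\Delta_{\gm}u\,dv_{\gm}=0$. The test function enters only once, and in a different way: one shows (via a separate ODE argument for $\int u\phi$) that $G(t):=\int u_{t}\,e^{-\lambda\eta(t)}\phi\,dv_{\gm}\gtrsim\ep$ for $t\ge 1$, and then H\"older plus Lemma~\ref{lemma3.1} convert this into $\int|u_{t}|^{p}\,dv_{\gm}\gtrsim\ep^{p}(1+t)^{-(n-1)(p-2)/2}$, i.e.\ $F_{0}''\gtrsim\ep^{p}(1+t)^{-(n-1)(p-2)/2}$. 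Two honest integrations give the seed $F_{0}\gtrsim\ep^{p}(1+t)^{2-(n-1)(p-2)/2}$, after which the standard Kato lemma with $F_{0}''\gtrsim F_{0}^{q}(1+t)^{-n(q-1)}$ yields \eqref{mixed-lifespan1}; the convergence condition for the iteration is precisely $(q-1)\bigl((n-1)p-2\bigr)<4$. Your proposal should be corrected to use this unweighted functional for the $|u|^{q}$ iteration and to restrict the role of $\psi$ (or $\tilde\psi$) to producing the lower bound on $\int|u_{t}|^{p}$.
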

\begin{rem}
As we have reviewed, in the previous works, the typical assumptions on the data are
\eqref{hs2} and $u_1\neq 0$ for
 \eqref{main-es} and
 \eqref{mixed-lifespan1}.
Here, we observe that we could actually relax the assumption to any nontrivial initial data
with \eqref{hs2}. It has the obvious benefit that we could then compare upper bounds between $S_\ep$, $G_\ep$ and $Z_\ep$.
\end{rem}
As is clear, the upper bounds \eqref{eq-life} and \eqref{eq-life-special} could be easily adapted from the corresponding proof for the results for the case $c_1=0$, and we refer
 \cite{LW2019} for the proof.

For the strategy of proof  of \eqref{main-es} and
 \eqref{mixed-lifespan1}, we basically use the test function method, with the help of \eqref{H1} and the transformation \eqref{bianhuan}. 
We should note that, it seems that the test function $\tilde{\psi}(t,x)=e^{-\la\eta(t)}\phi$ used in \cite{LW2019} does not work equally well for the nonlinear term $|u_{t}|^{p}$, unless one assume $b(t)\geq 0$. To avoid this difficulty, we work directly on the precise solution of linear ODE
 $$\psi_{tt}-\tilde{m}^{2}\psi=0\ .$$
In this case, we do not have the explicit formula of the solution. Nevertheless, we can obtain the desired asymptotic behavior of the solution by applying the Levinson theorem and ODE arguments, which is still sufficient for our proof.

As we see, our  result makes sense under the assumption that there exists a distribution solution that satisfies finite speed of propagation. Once we have the local well-posedness, we can remove this technical assumption. 
Before concluding the introduction,
 we discuss some situations where we do have local well-posedness.

At first, 
 the local energy and KSS estimates are available for nontrapping asymptotically Euclidean manifolds,
see Bony-H{\"a}fner \cite{BoHa} and Sogge-Wang \cite{sw2010}.
With help of these estimates, we could prove not only local well-posedness, but also almost global existence, 
for the  problem when $n=3$, $p=p_G=2$ with $c_2=0$.
It has been obtained in \cite{sw2010}, \cite{W15} for the case $b=0$.
As the argument in \cite{sw2010}, \cite{W15} could be easily adapted to the current setting, with additional absorption of the term $b(t)\in L^{1}(\R_+)$ by Gronwall's inequality, similar to that in \cite{LWDCDS}, we omit the proof.

 \begin{thm}
\label{main-2} 
  Let $n=3$ and $p=2$. Consider \eqref{nlw2} with $c_2=0$, $b(t)\in L^{1}(\R_+)$, posed on non-trapping asymptotically Euclidean manifolds \eqref{dl1}-\eqref{eq-g2-ae2}. Assuming that \eqref{H1} is valid. Then for any 
nontrivial initial data with \eqref{hs2},  there is $\ep_0>0$ so that for any $\ep\in (0, \ep_0)$, we have a unique local weak solution $(u, u_{t})\in C([0,T];H^{3}\times H^{2})$. Moreover, we have the following estimate for the lifespan $T_\ep$
  $$\exp(c_{0}/\ep)\leq T_{\ep}\leq \exp(C_{0}/\ep),$$ 
for some $0<c_{0}<C_{0}$.
   \end{thm}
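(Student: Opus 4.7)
The upper bound $T_\ep\le \exp(C_0/\ep)$ is immediate from Theorem \ref{main-a-g}(1) applied with $p=p_G(3)=2$, using the remark that nontriviality (rather than $u_1\neq 0$) suffices. The content of Theorem \ref{main-2} is therefore local well-posedness together with the lower bound, and the plan is to adapt the almost-global-existence argument of Sogge-Wang \cite{sw2010} and \cite{W15}, developed for $b\equiv 0$, treating the damping $b(t)u_t$ as an integrable perturbation absorbed by Gronwall's inequality in the spirit of \cite{LWDCDS}.

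Local well-posedness in $C([0,T];H^3)\cap C^1([0,T];H^2)$ is standard: since $s=3>n/2+1=5/2$, $H^2\hookrightarrow L^\infty$ is available and $H^2$ is a Banach algebra, so the quadratic nonlinearity $|u_t|^2$ is tame at this regularity. The energy identity for $\pa_t^2-\Delta_\gm+b(t)\pa_t$ picks up an extra term $\int_0^t b(s)\|u_s\|_{H^2}^2\, ds$, which Gronwall turns into a harmless prefactor $e^{C\|b\|_{L^1}}$ on any finite time interval. A contraction argument on a short $\ep$-dependent time interval then yields the unique local solution together with the usual blow-up alternative.

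For the lower bound $T_\ep\ge \exp(c_0/\ep)$, I would run the Picard iteration $u_0\equiv 0$ and
\[
  \pa_t^2 u_{k+1}-\Delta_\gm u_{k+1}+b(t)\pa_t u_{k+1}=c_1|\pa_t u_k|^2,
\]
inside the Sogge-Wang weighted space
\[
  \|u\|_{X_T}:=\sup_{0\le t\le T}\|\pa u(t,\cdot)\|_{H^2}+\left(\int_0^T\|\<x\>^{-1/2-\delta}\pa u(t,\cdot)\|_{H^2}^2\,dt\right)^{1/2}.
\]
The linear KSS bound and local energy decay for $\pa_t^2-\Delta_\gm$ on non-trapping asymptotically Euclidean manifolds are due to Bony-H\"afner \cite{BoHa}, and combine to give $\|u_{k+1}\|_{X_T}\les \ep+\bigl\||\pa_t u_k|^2\bigr\|_{L^1_tH^2_x}$. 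The critical quadratic structure in three dimensions generates a factor $\log T$ in the bilinear estimate, so the iteration converges exactly when $\ep^2\log T$ is small, producing $T\ge \exp(c_0/\ep)$.

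The main obstacle is maintaining the uniformity in $T$ of the KSS bound once the damping is present at every iterate. Following \cite{LWDCDS}, I would move $b(t)\pa_t u_{k+1}$ to the right-hand side and control its $L^1_tH^2_x$ norm by $\|b\|_{L^1}\sup_t\|\pa_t u_{k+1}\|_{H^2}\le \|b\|_{L^1}\|u_{k+1}\|_{X_T}$. Requiring $\ep_0$ small depending on $\|b\|_{L^1}$, this extra term is reabsorbed into the left-hand side without disturbing the critical logarithmic bookkeeping, and the lower bound matches the upper bound.
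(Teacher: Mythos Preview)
Your approach is exactly the one the paper adopts—in fact the paper omits the proof of Theorem \ref{main-2} entirely, saying only that the Sogge--Wang/Wang argument for $b\equiv 0$ (based on the KSS and local energy estimates of \cite{BoHa}, \cite{sw2010}) adapts with the damping handled via Gronwall as in \cite{LWDCDS}. Your outline of the iteration in the weighted space $X_T$ and of the logarithmic loss at $p=p_G(3)=2$ is correct, and the upper bound is indeed the $p=p_G$ case of Theorem \ref{main-a-g}(1).

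There is one slip in your last paragraph. Moving $b(t)\pa_t u_{k+1}$ to the right-hand side and bounding it in $L^1_tH^2_x$ by $\|b\|_{L^1}\|u_{k+1}\|_{X_T}$ yields
\[
\|u_{k+1}\|_{X_T}\le C\ep + C\|b\|_{L^1}\,\|u_{k+1}\|_{X_T}+\text{(nonlinear)},
\]
and the coefficient $C\|b\|_{L^1}$ in front of the term to be absorbed is fixed; making $\ep_0$ small cannot force it below $1$. The correct mechanism is the one you already invoked for local well-posedness: first run Gronwall on the energy piece $\sup_t\|\pa u(t)\|_{H^2}$ to pick up the harmless multiplicative factor $e^{C\|b\|_{L^1}}$, and then feed this bound into the Duhamel representation with the undamped propagator to recover the local-energy/KSS piece of $X_T$ (alternatively, slice $[0,T]$ into subintervals on which $\int|b|$ is small and iterate). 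After Gronwall the iteration constants depend on $\|b\|_{L^1}$, and \emph{this} is why $\ep_0$ must depend on $\|b\|_{L^1}$—through the threshold for closing the nonlinear iteration, not through a direct absorption of a linear term.
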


Recall that
as a fundamental tool to prove local well-posedness,
 Strichartz estimates for wave operators with variable $C^{1,1}$ coefficients have been well-understood, see
 Smith \cite{Smith1998}, Tataru \cite{Ta02} and references therein.
When $2\leq n\leq 4$, it turns out that we could use
Strichartz estimates to prove
 local well-posedness of \eqref{nlw2} on general Riemannian manifolds, 
 which apply for
 asymptotically Euclidean manifolds \eqref{dl1}-\eqref{eq-g2-ae2}.
 \begin{thm}
 \label{main-3}
  Let $2\leq n\leq 4$, $p>\max(1, (n-1)/2)$,
 $q>\max(1, (n-1)/2, n/2-1/(p-1))$. The problem \eqref{nlw2} with $b(t)\in L^{1}(\R_{+})$
 and nontrivial data, posed on $C^{3}$
Riemannian $(\R^n,\gm)$  with
$\nabla^\be g_{jk} \in L^{\infty}$,  $|\be|\le 3$,
 is locally well posed in $H^{s+1}\times H^{s}$ for small enough $\ep>0$,
 where  
\beeq\label{eq-s}
  s\in \Big(\max(\frac{n-1}{2},
  \frac{n+1}{4}, \frac{n}2
  -\frac{1}{p-1}), \min(2, p, q)\Big)\ .
\eneq
  In addition, if 
the initial data satisfies \eqref{hs2},
  $\gm=\gm_1+\gm_3$ with
\eqref{dl3}-\eqref{dl2} and 
\eqref{dlfjia},
we have upper bounds of the lifespan as in
Theorem \ref{main-a-g}.
   \end{thm}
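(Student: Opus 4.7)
The proof splits naturally into two parts: local well-posedness in $H^{s+1}\times H^{s}$, and the deduction of the upper bounds on the lifespan from Theorem \ref{main-a-g}. For the first part, the plan is a Picard iteration in a Strichartz-energy space. I would invoke the Strichartz estimates for wave operators with $C^{1,1}$ coefficients (Smith \cite{Smith1998}, Tataru \cite{Ta02}), which apply here since the hypotheses on $\gm$ give $g_{jk}\in C^{2}$ globally and the principal symbol is uniformly elliptic by \eqref{unelp}. For the linear equation $\pa_{t}^{2}u-\Delta_{\gm}u=F$ with data in $H^{s+1}\times H^{s}$, these provide, on a short interval $[0,T]$, bounds of the form
\[
\|u\|_{L^{\infty}_{t}H^{s+1}}+\|u_{t}\|_{L^{\infty}_{t}H^{s}}+\|\nabla u\|_{L^{a}_{t}L^{b}_{x}}\les \|(u_{0},u_{1})\|_{H^{s+1}\times H^{s}}+\|F\|_{L^{1}_{t}H^{s}},
\]
for Strichartz-admissible pairs $(a,b)$ in dimension $n$. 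The damping $b(t)u_{t}$ is placed on the right-hand side and absorbed via Gronwall's inequality, using $b\in L^{1}(\R_{+})$, exactly as in \cite{LWDCDS}.

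The contraction is then set up in a ball of the space $X_{T}$ consisting of pairs $(u,u_{t})\in C([0,T];H^{s+1}\times H^{s})$ together with auxiliary Strichartz norms $u\in L^{\alpha}_{t}L^{\beta}_{x}$ and $u_{t}\in L^{\alpha'}_{t}L^{\beta'}_{x}$ chosen so that the nonlinear estimates
\[
\||u|^{q}\|_{L^{1}_{t}H^{s}}\les T^{\theta_{1}}\|u\|^{q}_{X_{T}},\qquad \||u_{t}|^{p}\|_{L^{1}_{t}H^{s}}\les T^{\theta_{2}}\|u_{t}\|^{p}_{X_{T}}
\]
close with some $\theta_{i}>0$. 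The upper bounds $s<\min(2,p,q)$ ensure that the maps $v\mapsto |v|^{q}$ and $v\mapsto|v|^{p}$ are Lipschitz of the correct differentiability on $H^{s}$, and allow fractional Leibniz/chain-rule estimates; the lower bound $s>(n-1)/2$ together with $s>(n+1)/4$ gives access to wave-Sobolev embeddings and a genuinely admissible Strichartz pair; and $s>n/2-1/(p-1)$ is the scaling threshold that makes the $|u_{t}|^{p}$ estimate sub-critical. The main obstacle is precisely the $|u_{t}|^{p}$ term, since it requires Strichartz control of $u_{t}$ one derivative above the level of $u$; this is where the dimensional restriction $2\leq n\leq 4$ appears, because only in that range is the set of admissible pairs wide enough to accommodate all three constraints in \eqref{eq-s} simultaneously. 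A standard fixed-point argument on a small ball then gives existence, uniqueness, and continuous dependence in the class stated.

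For the lifespan part, I would combine the local result with Theorem \ref{main-a-g}. Under the additional assumptions $\gm=\gm_{1}+\gm_{3}$ with \eqref{dl3}--\eqref{dl2} and \eqref{dlfjia}, Hypothesis \eqref{H1} is verified as recalled after \eqref{dlfjia}, following the authors' earlier work \cite{LW2019}. Finite speed of propagation, which holds for wave operators with a $C^{1,1}$ Riemannian metric, propagates the compact support condition \eqref{hs2} on the data to the support condition \eqref{fsp} on the solution throughout its existence interval. A standard continuation criterion (the $H^{s+1}\times H^{s}$ norm must blow up at the maximal time of existence) together with the blow-up part of Theorem \ref{main-a-g} then forces the maximal lifespan $T_{\ep}$ to satisfy the same upper bounds $S_{\ep}$, $G_{\ep}$, $Z_{\ep}$ or $S^{1}_{\ep}$ as in Theorem \ref{main-a-g}, according to the case at hand, completing the proof.
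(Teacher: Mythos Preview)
Your proposal is correct and follows essentially the same strategy as the paper: Strichartz estimates of Smith--Tataru type, absorption of the damping via Gronwall, and a fixed-point argument in a Strichartz--energy space, with the lifespan bounds then inherited from Theorem~\ref{main-a-g} once \eqref{H1} and finite speed of propagation are in place.

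The paper's implementation is more concrete on two points where you remain schematic. First, rather than working with generic auxiliary norms $L^{\alpha}_{t}L^{\beta}_{x}$, the paper commutes the equation with $\nabla^{\alpha}$, $|\alpha|\le 2$ (this is where the $C^{3}$ hypothesis on $\gm$ enters), interpolates, and then uses Sobolev embedding to obtain the single endpoint-type norm $\|\pa u\|_{L^{r}_{t\in[0,1]}L^{\infty}_{x}}$ for any $r>\max(2,4/(n-1))$ with $s>n/2-1/r$; your three lower bounds on $s$ correspond exactly to the three constraints $r>2$, $r>4/(n-1)$, $r>p-1$. Second, the nonlinear estimate is handled via a Moser-type inequality (Hidano--Wang \cite{HW2019}) requiring $p,q>s$, and the contraction for differences is carried out only in the weaker $L^{\infty}_{t}L^{2}_{x}$ norm, which avoids differentiating the nonsmooth maps $v\mapsto|v|^{p},|v|^{q}$ at the $H^{s}$ level. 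Your outline would close once these choices are made explicit.
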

   See Figure \ref{tu1} for a comparison between region for local well posedness and blow up. 

 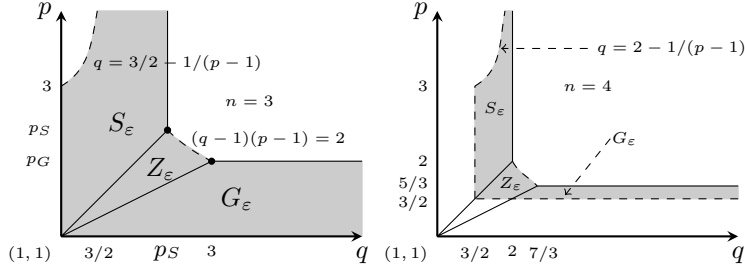
\begin{figure}
\centering
\begin{tikzpicture} 

\filldraw[black!20!white] (1,1)--(1,3)
to[out=30,in=260] (1.48,4)--(2.414,4)
--(2.414,2.414)
to[out=297,in=134](2.6,2.25)-- (3, 2)--(5,2)--(5,1);--(1,1);

\draw[dashed](1,3) to[out=30,in=260] (1.48,4);
\node[right] at (1.3,3.3) {\tiny $q=3/2-1/(p-1)$};
\draw (2.414,2.414)--(2.414,4) (3,2)--(5,2);
\draw[ dashed, domain=2.414:3] plot (\x, {{1+2/((\x)-1)}});
\node[right] at (2.6,2.3) {\tiny $(q-1)(p-1)=2$};
\node[right] at (1.5,2.5) {$S_{\ep}$};
\node[right] at (3,1.5) {$G_{\ep}$};
\draw[fill=black,line width=1pt] (2.414,2.414) circle[radius=0.3mm];
\draw[fill=black,line width=1pt] (3,2) circle[radius=0.3mm];
\node[below left] at (1,1) {\tiny $(1,1)$};
\node[left] at (1,2.414) {\tiny $p_S$};
\node[right] at (2,1.9) {$Z_{\ep}$};
\node[left] at (1,2) {\tiny $p_G$};
\node[below] at (2.414,1) {\small $p_S$};
\node[below] at (1.5,1) {\tiny $3/2$};
\node[left] at (1,3) {\tiny $3$};
\node[below] at (3,1) {\tiny $3$};
	\draw (1,1)--(2.414,2.414) (1,1)--(3,2);
	\draw[thick,-stealth] (1,1)--(1,4) node[left]{$p$};
	\draw[thick,-stealth] (1,1)--(5,1) node[below]{$q$};
\node[below] at (3.5,3) {\tiny $n=3$};

\node[left] at (6,3) {\tiny $3$};
\filldraw[black!20!white] (6.5,1.5)--(6.5,3)
to[out=30,in=260] (6.9,4)--(7,4)
--(7,2)
to[out=297,in=134] (7.33, 1.67)--(10,1.67)--(10,1.5)--(6.5,1.5);
\draw[dashed](6.5,3) to[out=30,in=260] (6.9,4) (7,2)
to[out=297,in=134] (7.33, 1.67) (6.5,1.5)--(6.5,3) (10,1.5)--(6.5,1.5);
	\draw (7,2)--(7,4) 	(7.33, 1.67)--(10,1.67) (6,1)--(7,2) (6,1)--(7.33,1.67);
	\draw[thick,-stealth] (6,1)--(6,4) node[left]{$p$};
	\draw[thick,-stealth] (6,1)--(10,1) node[below]{$q$};
\node[below left] at (6,1) {\tiny $(1,1)$};
\node[left] at (6,2) {\tiny $2$};
\node[right] at (6.7,1.7) {\tiny $Z_{\ep}$};
\node[right] at (8.2,2.3) {\tiny $G_{\ep}$};
	\draw[->, dashed] (8.3,2.3)--(7.7, 1.55);
\node[right] at (8,3.5) {\tiny $q=2-1/(p-1)$};
	\draw[->, dashed] (8,3.5)--(6.85, 3.5);

\node[right] at (6.5,2.7) {\tiny $S_{\ep}$};
\node[left] at (6, 1.7) {\tiny $5/3$};
\node[left] at (6, 1.45) {\tiny $3/2$};
\node[below] at (7.414,1) {\tiny $7/3$};
\node[below] at (7,1) {\tiny  $2$};
\node[below] at (6.5,1) {\tiny $3/2$};
\node[below] at (8,3.2) {\tiny $n=4$};

\end{tikzpicture}
 \caption{Local well posedness vs blow up, when $n=3, 4$}
\label{tu1}
\end{figure}

For high dimensional case, as $\min (p,q)$ is close to $1$, it seems difficult to employ the approach of using Strichartz estimates 
 obtain desired well-posed results. Instead, in the case of 
spherical symmetric, small metric perturbation, 
that is, $\gm_{2}=0$ and there exists
$\theta>0$,  such that
\beeq
\label{1029}
\sum_{j\in \Z} 2^{jk}\|\pa_r^k(K-1)\|_{L^{\infty}_{r}(r\sim 2^j)}\le \theta , k=0,1, 2\ ,
\eneq
it turns out that the homogeneous local energy and KSS-type estimates are available 
(see Metcalfe-Sogge \cite{MS2006} and Hidano-Wang-Yokoyama \cite[Lemma 2.3]{hyw1}).
Thus, in spirit of
\cite{hyw} and \cite{HJWLax},
we could prove well-posed results for the problem with $c_{2}=0$ in the radial case, with sharp lower bound of the lifespan.

 \begin{thm}
 \label{main-4}
 Let $n\geq 3$, $1<p\leq p_{G}$ and $\gm=\gm_{1}$ with \eqref{1029}. 
 Consider \eqref{nlw2} with $b(t)\in L^{1}(\R_+)$ and $c_2=0$. 
 Then there exists $\theta_0>0$ such that the following statement holds for $\theta\le \theta_0$:
 for any radial, nontrivial initial data 
satisfying \eqref{hs2}, there is $\ep_1>0$ so that for any $\ep\in (0, \ep_1)$, the problem
\eqref{nlw2}  admits
a unique local weak solution $(u, u_{t})\in C([0,T];H_{rad}^{2}\times H_{rad}^{1})$.  Moreover, let $T_\ep$ be the lifespan of the local solution, we have 
\beeq
\begin{cases}
c\ep^{-\frac{2(p-1)}{2-(n-1)(p-1)}}\leq T_{\ep}\leq C\ep^{-\frac{2(p-1)}{2-(n-1)(p-1)}}, \ 1<p<p_{G}\\
\exp{(c\ep^{-p(p-1)})}
\leq T_{\ep}\leq \exp{(C\ep^{-p(p-1)})},\ p=p_G\ 
\end{cases}
\eneq  
for some $0<c<C$.
Here, we use $H^{s}_{rad}$ to denote the space of spherically symmetric functions in the usual Sobolev space $H^{s}$.
\end{thm}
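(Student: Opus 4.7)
The plan is to first apply the transformation \eqref{bianhuan} to eliminate the damping term and reduce \eqref{nlw2} (with $c_2=0$) to the undamped equation \eqref{bianhuan1}. Because $b\in L^{1}(\R_+)$, the weight $m(t)=\exp(\int_0^t b\,d\tau)$ (and hence $\tilde m$) is bounded above and below by positive constants, so the factors $\tilde m^2(t)$ and $\tilde m^{2-p}(t)$ are harmless multipliers that can be absorbed into the constants, or into Gronwall when propagating energy identities.

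Next, I would invoke the homogeneous local energy and KSS-type estimates of Metcalfe--Sogge \cite{MS2006} and Hidano--Wang--Yokoyama \cite[Lemma 2.3]{hyw1}, which are valid on $(\R^n,\gm_1)$ under the smallness hypothesis \eqref{1029} applied to radially symmetric data. Combined with the sharp radial weighted Sobolev inequality $|x|^{(n-1)/2}|v(x)|\lesssim \|v\|_{H^{1}_{rad}}$ on dyadic annuli, these provide the weighted spacetime control on $\pa u=(u_t,\nabla u)$ needed to absorb the nonlinearity $|u_t|^p$, exactly as in the flat-metric radial iteration of \cite{hyw} and \cite{HJWLax}. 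Concretely, I would set up a Picard iteration $(u_k)$ solving
\begin{equation*}
\pa_t^2 u_k-\tilde m^2\Delta_\gm u_k=c_1\tilde m^{2-p}|\pa_t u_{k-1}|^p,\qquad (u_k,\pa_t u_k)|_{t=0}=\ep(u_0,u_1),
\end{equation*}
in the weighted spacetime norm built out of the KSS norm together with a weighted radial $L^p_t L^\infty_x$ bound on $\pa u$; the smallness in \eqref{1029} is used precisely to absorb the perturbative error terms arising when commuting $\pa_t^2-\tilde m^2\Delta_\gm$ with the radial flat d'Alembertian. Closing the contraction yields a unique $(u,u_t)\in C([0,T];H^2_{rad}\times H^1_{rad})$ so long as the running norm stays below a fixed small threshold.

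A standard bookkeeping then shows that the iteration closes whenever $\ep\,\mathcal{T}(T)\ll 1$, where $\mathcal{T}(T)=T^{(2-(n-1)(p-1))/(2(p-1))}$ in the subcritical range $1<p<p_G$ and $\mathcal{T}(T)=(\log(2+T))^{1/(p-1)}$ at $p=p_G$, producing the stated lower bound on $T_\ep$. For the matching upper bound, I would invoke Theorem \ref{main-a-g}(1) directly: the required finite propagation condition \eqref{fsp} follows from the classical finite-speed-of-propagation property for $\pa_t^2-\Delta_\gm$ on a spherically symmetric metric together with the radial support of the data, and the hypothesis \eqref{H1} is satisfied on $(\R^n,\gm_1)$ as the special case $\gm_3=0$ of the result of the authors recalled after \eqref{dlfjia}.

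The main obstacle, I expect, is the critical case $p=p_G$: the weighted $L^p_t L^\infty_x$ norm is scale-invariant there, so one must track the logarithmic losses carefully and use the radial trace inequality without any slack. The verification that the smallness \eqref{1029} survives after multiplication by the uniformly bounded coefficient $\tilde m^2(t)$ is routine, and upgrading the weak solution produced by the iteration to an $H^2_{rad}\times H^1_{rad}$-strong one with persistence of regularity is standard once the linear estimates are in hand.
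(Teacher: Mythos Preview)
Your overall strategy---local energy/KSS estimates on the small radial metric, trace estimates to convert $H^1_{rad}$ control into $r^{-(n-1)/2}$ pointwise decay of $u_t$, a Picard iteration closing when $A(T)\ep^{p-1}\ll1$, and then Theorem~\ref{main-a-g}(1) for the upper bound---is exactly the paper's route (Proposition~\ref{lem-2.4} together with Section~\ref{section3}).

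The one substantive difference is that you first apply the transformation \eqref{bianhuan} and then try to run the linear theory on $\pa_t^2-\tilde m^2(t)\Delta_{\gm_1}$, whereas the paper does \emph{not} transform for the existence part: it keeps the equation in the form $(\pa_t^2-\Delta_{\gm_1}+b(t)\pa_t)u=F$ and absorbs the first-order term $b(t)u_t$ into the $L^1_tL^2_x$ forcing via Gronwall (Lemma~\ref{lem-LE}). This is cleaner, because the Metcalfe--Sogge/Hidano--Wang--Yokoyama estimates you cite are stated for $\pa_t^2-\Delta_{\gm_1}$, and after your transformation the principal part carries the bounded-but-not-small factor $\tilde m^2(t)$. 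Your sentence that ``the smallness in \eqref{1029} is used precisely to absorb the perturbative error terms arising when commuting $\pa_t^2-\tilde m^2\Delta_\gm$ with the radial flat d'Alembertian'' is not quite right: \eqref{1029} controls only the spatial perturbation $\Delta_{\gm_1}-\Delta$, while $(\tilde m^2-1)\Delta$ is an $O(1)$ second-order term that cannot be made small by $\theta$. One can still push your route through by redoing the multiplier argument for $\pa_t^2-\tilde m^2(t)\Delta_{\gm_1}$ and using $(\tilde m^2)'\in L^1$ in a Gronwall step, but this is strictly more work than the paper's choice of leaving $b(t)u_t$ as a lower-order forcing. In short: same proof, but your detour through \eqref{bianhuan} buys nothing for the existence part and makes the linear step harder; the paper reserves the transformation for the blow-up argument only.
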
    
 Finally, we give the local well posedness for $c_1c_2\neq 0$ when $n\geq 5$ by exploiting local energy estimates. 
\begin{thm}
 \label{main-5}
 Let $n\geq 5$, $1<p<1+\frac{2}{n-2}$, $1<q<1+\frac{4}{n-4}$. Assuming $\gm=\gm_{1}$ with \eqref{1029}. 
 Consider \eqref{nlw2} with $b(t)\in L^{1}(\R_+)$. 
 Then there exists $\theta_1>0$ such that the following statement holds for $\theta\le \theta_1$:
 for any nontrivial radial initial data 
satisfying \eqref{hs2}, there is $\ep_1>0$ so that for any $\ep\in (0, \ep_1)$, the problem
\eqref{nlw2}  admits
a unique local weak solution $(u, u_{t})\in C([0,T];H_{rad}^{2}\times H_{rad}^{1})$.  Moreover, we have upper bounds of the lifespan as in
Theorem \ref{main-a-g}.
\end{thm}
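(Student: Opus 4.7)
My plan is to apply the transformation \eqref{bianhuan} in order to remove the damping from \eqref{nlw2}. Since $b\in L^{1}(\R_{+})$, the resulting $\tilde m(t)$ is uniformly bounded above and below by positive constants, so the problem reduces to local well-posedness in $H^{2}_{\mathrm{rad}}\times H^{1}_{\mathrm{rad}}$ for the transformed equation \eqref{bianhuan1}, whose linear part differs from the undamped wave equation only through the bounded, time-dependent coefficient $\tilde m^{2}(t)$. I would then run a standard contraction-mapping argument, using the radial homogeneous local energy and KSS-type estimates of Metcalfe--Sogge \cite{MS2006} and \cite[Lemma 2.3]{hyw1}, which are available under the spherical symmetry $\gm_{2}=0$ and the smallness condition \eqref{1029}.

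\textbf{Linear and nonlinear bounds.} Combining the KSS estimate with the standard energy inequality for radial solutions $v$ of $\pa_{t}^{2}v-\tilde m^{2}\Delta_{\gm}v=F$ gives
$$
\|v\|_{L^{\infty}_{t}H^{2}_{x}}+\|\pa_{t}v\|_{L^{\infty}_{t}H^{1}_{x}}+\|\<r\>^{-1/2-\delta}\nabla_{t,x}v\|_{L^{2}_{t}H^{1}_{x}}\les \|(v(0),\pa_{t}v(0))\|_{H^{2}\times H^{1}}+\|F\|_{L^{1}_{t}H^{1}_{x}},
$$
where the error from the metric perturbation is absorbed by choosing $\theta\le\theta_{1}$ small. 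The subcriticality conditions are tailored to match the radial Sobolev embeddings: $q<1+\frac{4}{n-4}$ is exactly $2q<\frac{2n}{n-4}$, so $H^{2}_{\mathrm{rad}}\hookrightarrow L^{2q}$ yields $\||u|^{q}\|_{L^{2}_{x}}\les \|u\|_{H^{2}}^{q}$, and similarly $p<1+\frac{2}{n-2}$ gives $H^{1}_{\mathrm{rad}}\hookrightarrow L^{2p}$ so that $\||u_{t}|^{p}\|_{L^{2}_{x}}\les \|u_{t}\|_{H^{1}}^{p}$. For the $H^{1}_{x}$-derivative piece of the forcing I would estimate $\nabla(|u|^{q})\sim |u|^{q-1}|\nabla u|$ by a H\"older split and absorb the missing integrability into the weighted KSS norm, using the radial pointwise bound $r^{(n-1)/2}|u(r)|\les \|u\|_{H^{1}}^{1/2}\|u\|_{H^{2}}^{1/2}$; the term $\pa_{t}(|u_{t}|^{p})\sim |u_{t}|^{p-1}|\pa_{t}u_{t}|$ is handled analogously. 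A short-time continuity argument then closes a contraction on a ball.

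\textbf{Main obstacle and lifespan.} The principal difficulty is that $p$ and $q$ may be arbitrarily close to $1$, so the Strichartz-based approach underlying Theorem \ref{main-3} is unavailable: the admissible regularity range \eqref{eq-s} collapses when $n\ge 5$ because $(n-1)/2\ge 2$. One therefore has to reach the critical Sobolev exponents $\frac{2n}{n-2}$ and $\frac{2n}{n-4}$ simultaneously, and the correct pairing of the weighted KSS norm against the radial Strauss decay is the crucial technical point, requiring the smallness of $\theta$. Once the local solution is extended to a maximal interval $[0,T_{\ep})$, the support condition \eqref{fsp} is preserved by finite speed of propagation, and Theorem \ref{main-a-g} applied to the extended solution yields the claimed upper bounds on $T_{\ep}$.
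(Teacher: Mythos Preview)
Your overall strategy is the same as the paper's: a contraction argument built on the homogeneous local energy and KSS-type estimates of \cite{MS2006} and \cite[Lemma 2.3]{hyw1}, together with the radial trace bounds of Lemma \ref{trace}. The identification of the thresholds $p<1+\tfrac{2}{n-2}$ and $q<1+\tfrac{4}{n-4}$ with the Sobolev embeddings $H^{1}\hookrightarrow L^{2p}$ and $H^{2}\hookrightarrow L^{2q}$ is also correct, and indeed this handles $\||u_t|^p\|_{L^2_x}$ and $\||u|^q\|_{L^2_x}$ without difficulty.

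The gap is in the derivative piece, and it is tied to your choice of forcing norm. Putting the inhomogeneity in $L^{1}_{t}H^{1}_{x}$ is too weak to close the estimate for $\nabla_x(|u_t|^p)\sim |u_t|^{p-1}\nabla_x u_t$ over the full range of $p$. The radial trace bound gives only $|u_t|\les r^{-(n-2)/2}\|u_t\|_{\dot H^1}$, so one has to absorb a factor $r^{-\mu}$ with $\mu=(n-2)(p-1)/2$ on $\nabla u_t$. With an unweighted forcing norm this entire singularity must be carried by the solution-side KSS weight $r^{-\mu'/2}$, forcing $\mu'=2\mu$ and hence $\mu<\tfrac12$, i.e.\ $p<1+\tfrac{1}{n-2}$ --- only half the claimed range. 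The paper avoids this by working with the dyadic weighted pair $X_T$/$N_T$ (Lemma \ref{lem-LE} and \eqref{11-2-6}): the forcing norm $N_T$ is dual to $X_T$ and contributes a \emph{positive} weight $r^{\mu/2}$, so the singularity is split symmetrically between forcing and solution. With $\mu=(n-2)(p-1)/2$ this yields exactly the threshold $p<1+\tfrac{2}{n-2}$.

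Two smaller points. First, the term you write as $\partial_t(|u_t|^p)\sim |u_t|^{p-1}|u_{tt}|$ does not arise: the $H^1$ forcing norm requires one \emph{spatial} derivative, and in any case $u_{tt}$ is not controlled at this regularity in the iteration. Second, for $\nabla(|u|^q)$ the single pointwise bound $r^{(n-1)/2}|u|\les\|u\|_{H^{3/2}}$ does not reach $q<1+\tfrac{4}{n-4}$; the paper covers the full interval by combining two arguments --- a Sobolev product estimate $\|\nabla u\|_{L^{2n/(n-2)}}\|u\|^{q-1}_{L^{n(q-1)}}$ for $q\le 1+\tfrac{2}{n-4}$, and a weighted trace bound (with the $N_T$ forcing norm again) for $q>1+\tfrac{2}{n-1}$.
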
  
See Figure \ref{tu2} for a comparison between region for local well posedness and blow up. 

 \begin{figure}
\centering
\begin{tikzpicture} 
\filldraw[black!20!white] (1,1)--(1,2.5)--(3.027, 2.5)--(3.5,2) to[out=297,in=134] (3.6, 2)--(4,2)--(4,1);--(1,1);
\draw[thick] (2.8,2.8)--(2.8,3.2) (3.5,2)--(5,2);
\draw[thick, dashed, domain=2.8:3.5] plot (\x, {{11.2/(\x)-1.2}});
\node[right] at (3.2,2.7) {\small $(q-1)((n-1)p-2)=4$};
\draw[fill=black,line width=1pt] (2.8,2.8) circle[radius=0.3mm];
\draw[fill=black,line width=1pt] (3.5,2) circle[radius=0.3mm];
\node[below left] at (1,1) {\small $(1,1)$};
\node[left] at (1,2.8) {$p_S$};
\node[left] at (1,2.4) {\small $1+\frac{2}{n-2}$};
\node[left] at (1,2) {$p_G$};
\node[below] at (2,1) {$p_G$};
\node[below] at (2.8,1) {$p_S$};
\node[below] at (4.1,1) {\small $\frac{n}{n-4}$};
\node[below] at (3.4,1) {\small $\frac{n+3}{n-1}$};
	\draw[thick, dashed] (1,2.5)--(4,2.5) (4,1)--(4,2.5);
	\draw[thick,-stealth] (1,1)--(1,3.2) node[left]{$p$};
	\draw[thick,-stealth] (1,1)--(5,1) node[below]{$q$};

\end{tikzpicture}
 \caption{Local well posedness vs blow up, when $n\geq 5$}
\label{tu2}
\end{figure}
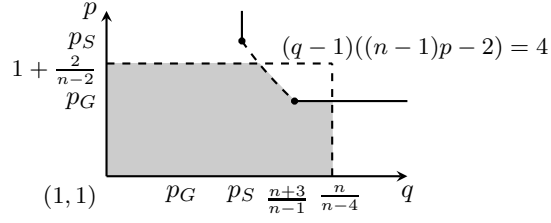

This paper is organized as follows. In the next section, we give the existence part of
Theorems \ref{main-2}-\ref{main-5},
i.e., local existence and uniqueness of weak solutions, which ensure the property of finite speed of propagation \eqref{fsp}, instead of the assumption. In Section 3, we give the proof of Theorems \ref{main-a-g}-\ref{main-5}
 with $c_1>0$, that is, \eqref{main-es} for $1<p\leq p_G$. In the last Section 4, we present the proof of the blow up part, \eqref{mixed-lifespan1}, for the case of mixed nonlinearities, with $c_1c_2\neq 0$.

We close this section by listing some notations. For a norm $X$ and a nonnegative integer $m$, we will use the shorthand notation
\[|\pa^{\le m} u| = \sum_{|\mu|\le m} |\pa^\mu u|,\quad \|\pa^{\le m} u\|_X
= \sum_{|\mu|\le m} \|\pa^\mu u\|_X.\]
For $1\leq r\leq \infty$ and $s\in \R$, we denote the norm  
$$\|u\|_{\dot{l}_{r}^s(X)}=\|(\phi_j(x)u(t,x))\|_{\dot{l}^{s}_r(X)} = \|(2^{js}\|\phi_j(x)u(t,x)\|_{X})\|_{\ell^{r}(j\in \mathbb{Z})},$$
for a partition of unity subordinate to the dyadic spatial annuli, $\sum_{j\in \mathbb{Z}}\phi_j^2(x)=1$, for any $x\neq0$.

\section{Local well-posedness}\label{section2}
In this section, we present the proof of local well-posedness for equation \eqref{nlw2}. In the following, we recall the trace estimates we will use later.
\begin{lem}[Trace estimates]
\label{trace} Let $n\geq 2$, $1/2<s<n/2$. Then 
$$\|r^{n/2-s}u\|_{L_r^{\infty}L_{\omega}^{2}}\les \|u\|_{\dot{H}^s}, \|r^{(n-1)/2}u\|_{L_r^{\infty}L_{\omega}^{2}}\les \|u\|_{H^s},$$
in particular, for radial functions,
$$\|r^{n/2-s}\langle r \rangle^{s-1/2}u\|_{L_x^\infty}\les \|u\|_{H_{rad}^s}\ .$$
\end{lem}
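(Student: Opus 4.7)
The three inequalities are closely related, so I would prove them in order and deduce each from the previous.

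\emph{First inequality.} The map $u(\cdot)\mapsto u(\la\cdot)$ scales both sides of $\|r^{n/2-s}u\|_{L^\infty_r L^2_\omega}\les \|u\|_{\dot H^s}$ by the same factor $\la^{s-n/2}$, so it is enough to bound $\|u(1,\cdot)\|_{L^2(S^{n-1})}$ by $\|u\|_{\dot H^s}$. I would attack this via a spherical harmonic expansion $u(r\omega)=\sum_{k,m}u_{k,m}(r)Y_{k,m}(\omega)$, reducing the claim to the one-dimensional weighted Sobolev bound $r^{n/2-s}|u_{k,m}(r)|\les \|u_{k,m}\|_{\dot H^s_k}$ with constant uniform in $k$; here $\dot H^s_k$ is the Sobolev space adapted to the $k$-th eigenspace, and the extra angular term $k(k+n-2)/r^2$ only helps. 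A Littlewood--Paley alternative combines the classical Sobolev trace theorem $H^s(B_1)\to L^2(S^{n-1})$ (which needs $s>1/2$) with a Bernstein estimate at low frequencies (which needs $s<n/2$ to sum).

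\emph{Second inequality.} Factor $r^{(n-1)/2}=r^{n/2-s}\cdot r^{s-1/2}$. For $r\le 1$ the first inequality does the job since $r^{s-1/2}\le 1$. For $r\ge 1$ I would apply the fundamental theorem of calculus
\[
r^{n-1}\|u(r,\cdot)\|_{L^2_\omega}^2 = -\int_r^\infty \pa_\rho\bigl(\rho^{n-1}\|u(\rho,\cdot)\|_{L^2_\omega}^2\bigr)\,d\rho,
\]
expand the derivative, and control the resulting terms by Cauchy--Schwarz plus the Hardy inequality $\|r^{-1/2}u\|_{L^2(\R^n)}\les \|u\|_{\dot H^{1/2}}$; interpolation then yields the bound by $\|u\|_{H^s}$ for any $s>1/2$.

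\emph{Radial case.} If $u$ is radial, $\|u(r,\cdot)\|_{L^2_\omega}\sim |u(r)|$ up to the constant $|S^{n-1}|^{1/2}$, so the first two inequalities give pointwise bounds on $|u(r)|$. Using the elementary inequality $\langle r\rangle^{s-1/2}\les 1+r^{s-1/2}$ one has $r^{n/2-s}\langle r\rangle^{s-1/2}\les r^{n/2-s}+r^{(n-1)/2}$; applying the first inequality on $\{r\le 1\}$ and the second on $\{r\ge 1\}$ completes the argument.

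I expect the main obstacle to be the uniformity in $k$ in the spherical-harmonic reduction (or equivalently the simultaneous control of high and low dyadic frequencies in the Littlewood--Paley approach). The hypothesis $1/2 < s < n/2$ is tight: $s>1/2$ is required at high frequencies for any trace bound on $S^{n-1}$, while $s<n/2$ is required at low frequencies to close the geometric sum.
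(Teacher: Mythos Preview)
The paper does not give its own proof of this lemma; it simply cites Fang--Wang and Wang--Yu for the result. So there is no detailed argument in the paper to compare your proposal against, and your plan is a reasonable outline of how these standard trace estimates are typically established.

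Your treatment of the first inequality (scaling reduction to the unit sphere, then spherical harmonics or Littlewood--Paley) and of the radial consequence is correct.

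There is, however, a genuine soft spot in your plan for the second inequality. The fundamental-theorem-of-calculus argument you describe yields
\[
r^{n-1}\|u(r,\cdot)\|_{L^2_\omega}^2 \les \|u\|_{L^2}\|\nabla u\|_{L^2}\les \|u\|_{H^1}^2,
\]
which is the $s=1$ case. Your claim that ``interpolation then yields the bound by $\|u\|_{H^s}$ for any $s>1/2$'' does not go through as stated: there is no $L^2$ endpoint to interpolate against (the estimate fails at $s=1/2$), and the Hardy inequality you invoke does not combine with the FTC identity to produce an $H^s$ bound for $s<1$ in the way you suggest. The clean fix is to apply the FTC bound \emph{piecewise} on a Littlewood--Paley decomposition, obtaining
\[
\|r^{(n-1)/2}P_j u\|_{L^\infty_r L^2_\omega}\les 2^{j/2}\|P_j u\|_{L^2},
\]
and then sum: for $j\ge 0$ use Cauchy--Schwarz against $2^{j(1/2-s)}$ (convergent since $s>1/2$), and for $j<0$ sum $2^{j/2}$ directly against $\|u\|_{L^2}$. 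Equivalently, the second inequality is really $\|r^{(n-1)/2}u\|_{L^\infty_r L^2_\omega}\les \|u\|_{B^{1/2}_{2,1}}$ together with the embedding $H^s\hookrightarrow B^{1/2}_{2,1}$ for $s>1/2$. This is presumably what the cited references do, and it is close in spirit to the ``Littlewood--Paley alternative'' you already mention for the first inequality.
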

See, e.g., Fang-Wang \cite{FW2011} or Wang-Yu \cite{WY2012}.

\subsection{Strichartz estimates}
\begin{lem}[Local homogeneous Strichartz estimates]
\label{pro-11-3}
Let  $n\ge 2$, $b\in L^1$,
$(\R^n,\gm)$ be $C^2$ Riemannian metric with
$
\nabla^\be g_{jk} \in L^{\infty}$,  $|\be|\le 2$. Then 
for $(s, q, r)$ admissible with $r<\infty$, that is 
$$\frac{1}{q}\leq \min\left(\frac{1}{2}, \frac{n-1}{2}
\left(\frac{1}{2}-\frac{1}{r}\right)\right), s=n\left(\frac{1}{2}-\frac{1}{r}\right)-\frac{1}{q}\ ,$$
we have the local homogeneous Strichartz estimates 
\beeq
\label{hsz}
\|\pa D^{-s}u\|_{L^{q}_{t\in[0, 1]}L_{x}^{r}}
+\|\pa u\|_{L^\infty_{t\in [0,1]} L^{2}}
\les \|\pa u(0)\|_{L^{2}}\ ,
\eneq
for solutions to 
$(\pt^2-\Delta_\gm+b(t)\pt)u=0$.
\end{lem}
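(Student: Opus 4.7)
The plan is to treat the damping $b(t)\pt u$ as an $L^1_t L^2_x$ source for the underlying variable-coefficient wave equation, combine the known Strichartz estimates of Smith--Tataru with an energy estimate, and close the loop via Gronwall. The first step is to establish the energy estimate $\|\pa u\|_{L^\infty_{t\in[0,1]} L^2}\les \|\pa u(0)\|_{L^2}$ by the standard multiplier method. Pairing the equation with $\pt u$ and integrating against the Riemannian volume yields
$$\frac{d}{dt}E(t)=-2 b(t)\int |\pt u|^2\gmm\,dx,\qquad E(t):=\int\bigl(|\pt u|^2+g^{jk}\pa_j u\,\pa_k u\bigr)\gmm\,dx.$$
By uniform ellipticity and boundedness of $(g^{jk})$ we have $E(t)\sim\|\pa u(t)\|_{L^2}^2$, so $\frac{d}{dt}E(t)\le 2|b(t)|\,E(t)$ and Gronwall together with $b\in L^1$ yield the desired uniform bound on $[0,1]$.

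Next, I would rewrite the equation as $(\pt^2-\Delta_\gm)u=-b(t)\pt u$ and apply the inhomogeneous Strichartz estimate for wave operators with $C^{1,1}$ metrics (which covers the $C^2$ case at hand), due to Smith \cite{Smith1998} and Tataru \cite{Ta02}. For any admissible $(s,q,r)$ with $r<\infty$, this produces
$$\|\pa D^{-s}u\|_{L^q_{t\in[0,1]}L^r_x}\les \|\pa u(0)\|_{L^2}+\|b(t)\pt u\|_{L^1_{t\in[0,1]}L^2_x}.$$
The source term factors as $\|b\|_{L^1([0,1])}\|\pa u\|_{L^\infty_{t\in[0,1]}L^2_x}$, which by the first step is bounded by $\|\pa u(0)\|_{L^2}$. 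Adding back the energy contribution on the left completes the estimate.

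The main potential obstacle is verifying that the Smith--Tataru Strichartz theorem transfers cleanly to our setting with the claimed admissibility range: the hypothesis $\nabla^\be g_{jk}\in L^\infty$ for $|\be|\le 2$ guarantees $C^{1,1}$ coefficients, so their result applies on the unit time interval with precisely the stated wave-admissible exponents $r<\infty$. The only other subtlety is the absence of a sign condition on $b(t)$, but this is immaterial since Gronwall only requires $\int_0^1|b(t)|\,dt<\infty$; no positivity of $b$ is ever used.
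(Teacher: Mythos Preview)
Your proposal is correct and follows essentially the same approach as the paper: the paper simply cites Smith \cite{Smith1998} and Tataru \cite{Ta02} for the case $b=0$ and then states that the general case follows from the Duhamel principle combined with Gronwall's inequality, which is exactly what you carry out in detail. Your separate derivation of the energy bound via the multiplier method before invoking the inhomogeneous Strichartz estimate is a minor rearrangement of the same argument, not a different route.
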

See Smith \cite{Smith1998} ($n=2, 3$) and Tataru \cite[Theorem 1.1]{Ta02} for the case $b(t)=0$. The result when $b(t)\in L^{1}$ follows directly if we combine it with the Duhamel principle and the Gronwall inequality. 
\begin{coro}
Let $2\le n\le 4$,
$(\R^n,\gm)$ be $C^3$ Riemannian metric with
$
\nabla^\be g_{jk} \in L^{\infty}$,  $|\be|\le 3$,
 $\max{(2, 4/(n-1))}<q<\infty$ and $\tau\in(\frac{n}{2}-\frac{1}{q}, 2]$.
Then
the solution of equation $(\pt^2-\Delta_\gm+b(t)\pt)u=F$ satisfies
\beeq
\label{NoHStri}
\|\pa u\|_{L^q_{t\in [0,1]} L_x^{\infty}}+
\|\pa u\|_{L^{\infty}_{t\in[0, 1]}H^{\tau}}\les\|\pa u(0)\|_{H^{\tau}}+\|F\|_{L^{1}H^{\tau}}\ .
\eneq

\end{coro}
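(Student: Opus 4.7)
The plan is to deduce the $L^q_t L^\infty_x$ Strichartz bound from Lemma \ref{pro-11-3} via Sobolev embedding, combine it with a standard energy estimate for the $L^\infty_t H^\tau_x$ part, and handle the source term $F$ by Duhamel's formula.

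For the homogeneous problem, fix $q>\max(2,4/(n-1))$ and pick an auxiliary Lebesgue index $r<\infty$ large enough so that the triple $(s,q,r)$ with $s=n(\tfrac12-\tfrac1r)-\tfrac1q$ is admissible in Lemma \ref{pro-11-3}. The constraint $1/q<1/2$ is automatic from $q>2$, while the wave admissibility $\tfrac1r\le\tfrac12-\tfrac{2}{(n-1)q}$ holds once $r$ is large since $q>4/(n-1)$ makes the right-hand side strictly positive. Lemma \ref{pro-11-3} then delivers $\|\pa v\|_{L^q_t L^r_x}\les\|\pa v(0)\|_{\dot H^s}$ for any homogeneous solution $v$. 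Using Sobolev embedding $W^{\sigma,r}(\R^n)\hookrightarrow L^\infty(\R^n)$ with $\sigma>n/r$ and applying the estimate to $D^\sigma v$ in place of $v$, I obtain
\[
\|\pa v\|_{L^q_t L^\infty_x}\les \|\pa v(0)\|_{H^{s+\sigma}}.
\]
Given $\tau\in(n/2-1/q,2]$, choose $r$ large and $\sigma$ slightly above $n/r$ so that $s+\sigma=\tfrac{n}{2}-\tfrac{1}{q}+(\sigma-n/r)\le\tau$; this delivers the $L^q_t L^\infty_x$ bound controlled by $\|\pa v(0)\|_{H^{\tau}}$.

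The $L^\infty_t H^\tau_x$ control on $\pa u$ is a standard energy estimate: apply a fractional derivative of order $\tau$ to the equation, use the $L^2$ energy identity, and observe that the commutator with $\Delta_\gm$ is bounded thanks to $\nabla^{\le 3}g_{jk}\in L^\infty$, while the damping term $b(t)\pa_t u$ is absorbed by Gronwall together with $b\in L^1$. For the inhomogeneous problem, write $u=v+\int_0^t S(t,s)(0,F(s))\,ds$, where $v$ solves the homogeneous equation with data $(u(0),\pa_t u(0))$ and $S(\cdot,s)$ is the homogeneous propagator with data at time $s$; since $\pa S(\cdot,s)(0,F(s))|_{t=s}=(F(s),0)$ has $H^\tau$-energy $\|F(s)\|_{H^\tau}$, applying the homogeneous bounds slice by slice and invoking Minkowski's inequality over $s\in[0,1]$ produces the desired $\|F\|_{L^1 H^\tau}$ contribution.

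The main obstacle is the Sobolev upgrade step: one must push $r\to\infty$ while keeping the combined exponent $s+\sigma$ arbitrarily close to $n/2-1/q$ without violating wave admissibility. The lower endpoint $\tau>n/2-1/q$ in the statement is exactly the scaling exponent produced by this interpolation, and the upper endpoint $\tau\le 2$ reflects the regularity ceiling of Tataru's variable-coefficient Strichartz estimate under $C^{1,1}$-type hypotheses on $\gm$, which is comfortably satisfied by the $C^3$ assumption.
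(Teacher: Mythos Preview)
There is a genuine gap in the Sobolev upgrade step. You write that Lemma~\ref{pro-11-3} gives $\|\pa v\|_{L^q_tL^r_x}\les\|\pa v(0)\|_{\dot H^s}$ and then propose to ``apply the estimate to $D^\sigma v$ in place of $v$''. But the lemma is stated for solutions of the variable-coefficient equation $(\pa_t^2-\Delta_\gm+b(t)\pa_t)u=0$, and $D^\sigma v$ is \emph{not} such a solution: the Fourier multiplier $D^\sigma$ does not commute with $\Delta_\gm$ when $\gm$ is non-flat. Equivalently, the estimate $\|\pa D^{-s}u\|_{L^q L^r}\les\|\pa u(0)\|_{L^2}$ cannot be converted into $\|\pa v\|_{L^q L^r}\les\|\pa v(0)\|_{\dot H^s}$ by substituting $u=D^s v$. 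The same issue afflicts your energy step ``apply a fractional derivative of order $\tau$'': the commutator $[D^\tau,\Delta_\gm]$ for non-integer $\tau$ would require Kato--Ponce or paradifferential machinery that you do not invoke, and the bare hypothesis $\nabla^{\le 3}g\in L^\infty$ is not obviously sufficient.

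The paper avoids fractional commutators entirely. It applies \emph{integer} derivatives $\nabla^\alpha$, $|\alpha|\le 2$, to the equation, computes the explicit commutator $[-\Delta_\gm,\nabla^\alpha]u=\mathcal O\bigl(\sum_{1\le|\beta|\le 1+|\alpha|}|\nabla^\beta u|\bigr)$ (which is where $\nabla^{\le 3}g\in L^\infty$ enters), feeds this back into Lemma~\ref{pro-11-3} plus Duhamel, and closes by Gronwall to obtain the estimate at the integer level $\tau=2$, namely $\|\pa D^{2-s}u\|_{L^q L^r}+\|\pa u\|_{L^\infty H^2}\les\|\pa u(0)\|_{H^2}$. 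Interpolating this with the $\tau=0$ estimate from Lemma~\ref{pro-11-3} then gives every $\tau\in[0,2]$; the Sobolev embedding into $L^\infty_x$ requires precisely $\tau-s>n/r$, i.e.\ $\tau>n/2-1/q$. This also clarifies your closing remark: the ceiling $\tau\le 2$ is not an intrinsic limitation of Tataru's Strichartz theorem but rather the highest integer level at which the commutator $[\Delta_\gm,\nabla^\alpha]$ can be controlled under the $C^3$ hypothesis on $\gm$.
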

\begin{proof}
We need only to give the proof for $F=0$, by Duhamel's principle.
By applying $\nabla^\al$, $|\al|\le 2$, to $\pa^{2}_{t}u-\Delta_{\gm}u+b(t)u_{t}=0$,
we get
$$(\pa^{2}_{t} -\Delta_{\gm}+b(t) \pt)\nabla^{\al} u=
[-\Delta_{\gm},\nabla^{\al}] u=\mathcal{O}(
\sum_{1\leq |\be|\leq 1+|\al|}|\nabla^\be u|
)\ .$$
Then by \eqref{hsz} and Gronwall's inequality, we have 
\beeq
\label{eq-11-4}
\|\pa D^{2-s}u\|_{L^{q}_{t\in[0, 1]}L_{x}^{r}}
+\|\pa u\|_{L^\infty_{t\in [0,1]} H^{2}}
\les \|\pa u(0)\|_{H^{2}}\ .
\eneq
By interpolation, we know that 
$$
\|\pa  D^{{\tau}-s} u\|_{L^{q}_{t\in[0, 1]}L_{x}^{r}}
+\|\pa u\|_{L^\infty_{t\in [0,1]} H^{{\tau}}}
\les \|\pa u(0)\|_{H^{{\tau}}}\ ,
$$
for any $\tau\in [0, 2]$.
If $\tau-s>\frac{n}{r}$, that is, $\tau>\frac{n}{2}+s=\frac{n}{2}-\frac{1}{q}$,
by the Sobolev inequality, we have
$$\|\pa u\|_{L^q_{t\in [0,1]} L_x^{\infty}}\leq \|\pa D^{\tau-s}u\|_{L^{q}_{t\in[0, 1]}L_{x}^{r}}\les \|\pa u(0)\|_{H^{{\tau}}}\ ,$$
which completes the proof.
\end{proof}
 \begin{proposition}\label{prop-Stri}
 Let $2\leq n\leq 4$, $p>\max(1, (n-1)/2)$,
 $q>\max(1, (n-1)/2, n/2-1/(p-1))$. The problem \eqref{nlw2} with $b(t)\in L^{1}(\R_{+})$, posed on 
Riemannian $(\R^n,\gm)$  with
$\nabla^\be g_{jk} \in L^{\infty}$,  $|\be|\le 3$,
 is locally well posed in $H^{s+1}\times H^{s}$,
 where  
\beeq\label{eq-s}
  s\in \Big(\max(\frac{n-1}{2},
  \frac{n+1}{4}, \frac{n}2
  -\frac{1}{p-1}), \min(2, p, q)\Big)\ ,
\eneq
for small enough $\ep>0$.
 \end{proposition}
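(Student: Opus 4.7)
The approach would be the standard contraction mapping principle in a Strichartz-based function space, anchored by the linear estimate \eqref{NoHStri}.

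First I would select an auxiliary exponent $Q$ (distinct from the nonlinearity parameter $q$ of the problem) with
$$Q > \max\!\left(2,\ \tfrac{4}{n-1},\ p-1,\ \tfrac{2}{n-2s}\right).$$
The lower bounds $s > (n+1)/4$ and $s > n/2 - 1/(p-1)$ in \eqref{eq-s} are precisely what permit such a $Q$ to be chosen while keeping $s \in (n/2-1/Q, 2]$; indeed, for $2 \le n \le 4$ the condition $s > (n+1)/4$ is exactly $s > n/2 - 1/\max(2, 4/(n-1))$. The estimate \eqref{NoHStri} applied to the linear problem $(\pt^2 - \Delta_\gm + b(t)\pt)v = F$ with data $\ep(u_0, u_1)$ then yields, for $T\le 1$,
$$\|\pa v\|_{L^\infty_{[0,T]} H^s_x} + \|\pa v\|_{L^Q_{[0,T]} L^\infty_x} \le C\bigl(\ep \|(u_0,u_1)\|_{H^{s+1}\times H^s} + \|F\|_{L^1_t H^s_x}\bigr).$$

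Next I would run the iteration on
$$X_T = \bigl\{u : \|\pa u\|_{L^\infty_t H^s_x} + \|\pa u\|_{L^Q_t L^\infty_x} \le M\ep\bigr\},$$
with $\Phi(u)=v$ the solution of the linear equation sourced by $F = c_1|u_t|^p + c_2|u|^q$. The main nonlinear bounds would come from the fractional chain rule. Since $s<p$,
$$\||u_t|^p\|_{L^1_t H^s_x} \les T^{1-(p-1)/Q}\,\|u_t\|_{L^Q_t L^\infty_x}^{p-1}\,\|u_t\|_{L^\infty_t H^s_x},$$
the positive power of $T$ arising exactly because $Q>p-1$; and since $s<q$, together with the Sobolev embedding $H^{s+1}\hookrightarrow L^\infty$ (available because $s>(n-1)/2$, so in particular $s+1>n/2$),
$$\||u|^q\|_{L^1_t H^s_x} \les T\,\|u\|_{L^\infty_t H^{s+1}_x}^{q-1}\,\|u\|_{L^\infty_t H^s_x},$$
with $\|u\|_{L^\infty_t H^{s+1}_x} \les \ep + T\|\pa u\|_{L^\infty_t H^s_x}$ obtained by integrating $u_t$ in time. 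Combined with the linear bound, these estimates show that $\Phi:X_T\to X_T$ for $\ep$ and $T$ sufficiently small; an entirely analogous difference estimate then yields contraction, hence local existence, uniqueness, and continuous dependence in $H^{s+1}\times H^s$.

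The one delicate input is the fractional Moser estimate for non-integer, non-smooth powers $|u_t|^p$ and $|u|^q$ when $p$ or $q$ is close to $1$; the upper constraint $s < \min(2,p,q)$ in \eqref{eq-s} is tailored precisely so that these bounds hold with the displayed scaling, and I do not anticipate any essentially new harmonic-analytic difficulty. The real architectural content of the argument, and the reason that each of the three lower bounds in \eqref{eq-s} appears, is the simultaneous juggling of Strichartz admissibility ($Q>2$, $Q>4/(n-1)$) and Sobolev embedding into $L^\infty$ ($s > n/2 - 1/Q$) together with the need for a genuine positive power of $T$ in the $|u_t|^p$ estimate ($Q>p-1$); these three constraints on $Q$ become compatible exactly under the hypotheses on $s$ stated in \eqref{eq-s}.
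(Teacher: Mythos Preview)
Your approach matches the paper's almost exactly: the paper also picks $r>\max(2,4/(n-1),p-1)$ with $s\in(n/2-1/r,2)$, iterates in the space with norm $\|u\|_{L^\infty H^{s+1}}+\|u_t\|_{L^\infty H^s}+\|\pa u\|_{L^r_t L^\infty_x}$ on the fixed interval $[0,1]$, and closes the boundedness step via the same Moser-type inequality (citing Hidano--Wang, Lemma~2.3). One small point worth flagging: the paper runs the contraction step in the weaker norm $L^\infty_t L^2_x$ rather than in the full Strichartz space, which sidesteps any $H^s$ difference estimate on $|u_t|^p-|w_t|^p$ (potentially delicate when $p<2$); your phrase ``entirely analogous difference estimate'' is correct if read in that sense.
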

\begin{proof} 
For fixed $s$ satisfying
\eqref{eq-s}, there is
$r>\max (2, 4/(n-1), p-1)$ such that
$s\in (n/2-1/r, 2)$ so that we could apply \eqref{NoHStri}.
Let
$$\|u\|_{X^s}=
\| u\|_{L^\infty_{t\in [0,1]} H^{s+1}}+
\|u_{t}\|_{L^\infty_{t\in [0,1]} H^{s}}+
\|\pa u\|_{L_t^{r}( [0, 1])L_x^{\infty}}\ .$$
For any $u\in X^{s}$, we define $\Pi u$ to be the solution to the linear equation
\beeq
\left\{\begin{array}{l}(\pa^{2}_{t} - \Delta_{\gm} +b(t)\pt)\Pi u=F(u):=c_1|u_t|^{p}+c_{2}|u|^{q},\quad
\\
\Pi u(0,x) = \ep u_0,
 \pt  \Pi u(0,x) = \ep u_1.\end{array}
 \right.
 \label{11-3-1}
\eneq
In view of the Strichartz estimate \eqref{NoHStri}, 
we have
$$\|\Pi u\|_{X^{s}}\les
\ep(\|u_{0}\|_{H^{s+1}}+\|u_{1}\|_{H^{s}})+
\|F(u)\|_{L^{1}H^{s}}\ ,
$$
$$\|\Pi u-\Pi w\|_{L^{\infty}L^{2}}\les
\|F(u)-F(w)\|_{L^{1}L^{2}}
\ .
$$
Then,  when $\ep$ is small enough, by a standard iteration argument,
 the proof of local well posedness for $t\in [0, 1]$
  is reduced to the proof of the following two
 nonlinear estimates
\beeq\label{eq-Stri1}
\|F(u)\|_{L^{1}H^{s}}
\les \|u\|_{X^{s}}^{p}+\|u\|_{X^{s}}^{q}\ ,
\eneq
\beeq\label{eq-Stri2}
\|F(u)-F(w)\|_{L^{1}L^{2}}
\les 
\|(u,w)\|_{X^{s}}^{p-1}
\| \pt u-\pt w\|_{L^{\infty}L^{2}}
+\|(u,w)\|_{X^{s}}^{q-1}
\| u- w\|_{L^{\infty}L^{2}}
\ .
\eneq

The second inequality
\eqref{eq-Stri2} follows directly from the Sobolev embedding and H\"older's inequality,
as $s+1>n/2$ and  $r>p-1$.
Concerning \eqref{eq-Stri1}, since $p, q>s$ and $s+1>n/2$,
by the Moser type inequality, see, e.g., Hidano-Wang \cite[Lemma 2.3]{HW2019}, we have 
$$\|F(u)\|_{H^{s}}\les \|u_t\|^{p-1}_{L^{\infty}}\|u_t\|_{H^{s}}
+\|u\|^{q-1}_{L^{\infty}}\|u\|_{H^{s}}
\les
 \|u_t\|^{p-1}_{L^{\infty}}\|u_t\|_{H^{s}}
+\|u\|^{q-1}_{H^{s+1}}\|u\|_{H^{s}}
\ ,$$
and so, as $r>p-1$,
$$\|F(u)\|_{L^{1}H^{s}}
\les
 \|u_t\|^{p-1}_{L^{r}L^{\infty}}\|u_t\|_{L^{\infty}H^{s}}
+\|u\|^{q-1}_{L^{\infty}H^{s+1}}\|u\|_{L^{\infty}H^{s}}
\les \|u\|_{X^{s}}^{p}+\|u\|_{X^{s}}^{q}
\ ,$$
which is \eqref{eq-Stri1}. This completes the proof.
\end{proof}

\subsection{The Glassey conjecture for small, radial asymptotically Euclidean manifolds} 

As we mentioned before,
when $\gm=\gm_{0}$ and $n\geq 2$, 
the local well-posedness and long time existence for \eqref{nlw2} with $b=c_2=0$ has been studied in
\cite{hyw} and \cite{HJWLax}, 
by exploiting Morawetz type local energy estimates and its variants
(see, e.g., \cite{KSS2002}, 
\cite{MS2006}, \cite{hyw1}), together with the trace estimates.
The similar approach could apply in the setting of small, radial asymptotically Euclidean manifolds, at least when $n\ge 3$.

In practice, the following estimates are useful in proving well-posedness for
\eqref{nlw2},
\beeq
\label{11-2-6}
T^{\frac{\mu-1}{2}}\|r^{-\frac{\mu}{2}}\tilde\pa u\|_{L_{T,x}^{2}}+ (\ln(T+2))^{-\frac{1}{2}}\|r^{-\frac{\mu}{2}}\langle r \rangle^{-\frac{1-\mu}{2}}\tilde\pa u\|_{L_{T,x}^{2}}\les \|u\|_{X_T}\ , \mu\in (0, 1)\ ,
\eneq
where, we denote $L_{T}^{q}$ as $L_{t}^{q}([0, T])$,
$\tilde \pa u=(\pa u, u/r)$
 and
$$\|u\|_{X_T}:=\|\tilde \pa u\|_{\dot{\ell}^{-\frac{1}{2}}_{\infty}L_{t,x}^{2}\cap L_t^{\infty}L_x^2([0,T]\times\R^n)},
\|F\|_{N_T}:=\|F\|_{\dot{\ell}^{\frac{1}{2}}_{1}L_{t,x}^{2}+ L_t^{1}L_x^2([0,T]\times\R^n)}
\ .$$
See, e.g., \cite{hyw1}, for a proof.
Concerning $X_T$ and $N_T$, we have the following version of the local energy estimates.
\begin{lem}\label{lem-LE}
Let $n\ge 3$, $b\in L^1$,
$(\R^n,\gm)$ with $\gm=\gm_{1}$ satisfying \eqref{1029} and $\theta$ small enough.
Then there exists a constant $C>0$ such that
\beeq
\label{LoES}
\|u\|_{X_T}\le C \|\pa u(0)\|_{L^2}+C\| F\|_{N_T}\ ,
\eneq
\beeq
\label{LoES2}
\|\nabla^{\leq 1} u\|_{X_T}\le C \|\pa u(0)\|_{H^1}+C\|\nabla^{\leq 1} F\|_{N_T}\ .
\eneq
for any solutions to $(\pt^2-\Delta_\gm+b(t)\pt)u=F$ in $[0,T]\times\R^n$.
\end{lem}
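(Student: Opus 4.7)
The plan is to prove \eqref{LoES} and \eqref{LoES2} by combining the known Morawetz/KSS-type local energy estimates for the undamped wave equation on small spherically symmetric perturbations of the Euclidean metric with a Gronwall argument to absorb the $L^1$ damping.

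First I would recall the undamped counterpart: for $n\ge 3$ and $\gm=\gm_1$ satisfying \eqref{1029} with $\theta$ small enough, any solution to $(\pt^2-\Delta_\gm)u=G$ satisfies
$$\|u\|_{X_T}\le C\|\pa u(0)\|_{L^2}+C\|G\|_{N_T}.$$
This is precisely the dyadic-localized KSS estimate established in \cite{MS2006} and \cite[Lemma 2.3]{hyw1}. It is proved by testing the equation against multipliers of the form $f(r)\bigl(\pt_r u+\tfrac{n-1}{2r}u\bigr)$, together with the dyadic weight $r^{-\mu}\<r\>^{-(1-\mu)}$ as in \eqref{11-2-6}. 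The spherical symmetry of $\gm_1$ means this multiplier almost commutes with $\Delta_\gm$, and the condition \eqref{1029} ensures the error terms coming from $K\ne 1$ are bounded by $\theta\|u\|_{X_T}$ and thus absorbed on the left for $\theta$ sufficiently small.

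Next, for \eqref{LoES} I would treat $b(t)\pt u$ as a source. Writing the equation as $(\pt^2-\Delta_\gm)u=F-b(t)\pt u$ and applying the undamped estimate on $[0,t]$ gives
$$\|u\|_{X_t}\le C\|\pa u(0)\|_{L^2}+C\|F\|_{N_T}+C\int_0^t|b(s)|\,\|\pa u(s,\cdot)\|_{L^2_x}\,ds.$$
Since $\|\pa u(s,\cdot)\|_{L^2_x}\le\|u\|_{X_s}$ and $\|b\|_{L^1(\R_+)}<\infty$, Gronwall's inequality yields \eqref{LoES} with a constant that depends on $\|b\|_{L^1}$ but is independent of $T$. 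For \eqref{LoES2} I would commute a spatial derivative with the equation,
$$(\pt^2-\Delta_\gm+b(t)\pt)\nabla u=\nabla F+[\Delta_\gm,\nabla]u,$$
apply \eqref{LoES} to $\nabla u$, and then close the estimate for $\|\nabla^{\le 1}u\|_{X_T}$ after controlling the commutator.

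The main obstacle is this commutator estimate. The term $[\Delta_\gm,\nabla]u$ involves one derivative of the metric against $\nabla^2 u$, e.g.\ factors of the form $\pa_r K\cdot \pa_r^2 u$, which is problematic since $X_T$ only controls $\tilde\pa u=(\pa u,u/r)$ rather than $\nabla^2 u$. The resolution is a dyadic argument exploiting \eqref{1029}: on the annulus $r\sim 2^j$ one has $|\pa_r K|\lesssim\theta_j 2^{-j}$ with $\sum_j\theta_j\le\theta$, so that the $\dot\ell^{1/2}_1 L^2_{t,x}$ piece of $N_T$ absorbs one factor of $r^{-1}$ into the weight and reduces to a bound of the form $\theta\|\nabla^{\le 1}u\|_{X_T}$, which is then absorbed on the left when $\theta$ is small. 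The damping is handled by the same Gronwall trick as above, giving \eqref{LoES2}.
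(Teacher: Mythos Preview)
Your proposal is correct and follows essentially the same route as the paper: cite \cite{MS2006} and \cite[Lemma 2.3]{hyw1} for the undamped estimate, absorb $b(t)\pt u$ by Gronwall to get \eqref{LoES}, then commute one spatial derivative and control $[\Delta_\gm,\nabla]u$ in $N_T$ using the dyadic smallness \eqref{1029} to absorb into $\theta\|\nabla^{\le 1}u\|_{X_T}$. One small clarification: your phrasing of the obstacle is slightly off, since after applying \eqref{LoES} to $\nabla u$ the norm $\|\nabla u\|_{X_T}$ \emph{does} control $\nabla^2 u$ with weight $r^{-1/2}$; the genuine issue is the one-power-of-$r$ gap between the $\dot\ell^{1/2}_1$ weight in $N_T$ and the $\dot\ell^{-1/2}_\infty$ weight in $X_T$, which is closed exactly by the extra $r^{-1}$ decay of $\nabla g$ (and $r^{-2}$ for $\nabla^2 g$, needed for the lower-order commutator piece $\nabla^2 g\cdot\nabla u$ you did not mention) coming from \eqref{1029}---the paper records this as $\|\tilde\pa\nabla g\|_{\dot\ell^2_1 L^\infty_x}\ll 1$.
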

\begin{proof}
Recall that, when $\gm=\gm_{1}$ satisfies \eqref{1029} for $k=0,1$ with
 $\theta$ small enough, 
the inequality \eqref{LoES}
has essentially been proved in Metcalfe-Sogge \cite{MS2006} and Hidano-Wang-Yokoyama \cite[Lemma 2.3]{hyw1} for $b=0$, while the general case with $b\in L^1$ follows from the Gronwall inequality. 

Since $$(\pa^{2}_{t}-\Delta_{\gm}+b(t)\pt) \nabla u= \nabla F +[\nabla, \Delta_{\gm}]u=\nabla F+\mathcal{O}( |(\nabla^{2}g) (\nabla u)|+|(\nabla g) (\nabla^2 u)|)\ ,$$
 by \eqref{LoES}, we have 
\begin{align*}
\| \nabla u\|_{X_T}\les &\|\pa u(0)\|_{\dot{H}^1}+\|\nabla F\|_{N_T}+\|\nabla^{2}g \nabla u\|_{\dot{\ell}^{\frac{1}{2}}_{1}L_{t,x}^{2}}+\|\nabla g \nabla^2 u\|_{\dot{\ell}^{\frac{1}{2}}_{1}L_{t,x}^{2}}\\
\les &\|\pa u(0)\|_{\dot{H}^1}+\| \nabla F\|_{N_T}+
\| \tilde\pa \nabla g\|_{\dot{\ell}^{2}_{1}L_{x}^{\infty}} \|\nabla u\|_{X_T}.
\end{align*}
As $\| \tilde\pa \nabla g\|_{\dot{\ell}^{2}_{1}L_{x}^{\infty}}\ll 1$ by assumption \eqref{1029}, we obtain 
 \eqref{LoES2}.
\end{proof}

\begin{proposition}
\label{lem-2.4}
Let $n\geq 3$ and $1<p\leq p_{G}$. Consider \eqref{nlw2} on asymtotically Euclidean manifolds $\gm$ with $\gm=\gm_{1}$ satisfying \eqref{1029} and $\theta$ small enough. Then \eqref{nlw2} with $c_2=0$ is locally well-posed in $H_{rad}^2\times H_{rad}^1$ for small enough $\ep$. Moreover, 
there exists $c>0$ such that,
 for any initial data $(u_0, u_1)\in H_{rad}^2\times H_{rad}^{1}$ with 
$$\|u_0\|_{H^2}+\|u_1\|_{H^1}\leq 1,$$ there exists a
 unique solution $u\in C([0, T_{\ep}]; H_{rad}^{2})\cap C^{1}([0, T_{\ep}]; H_{rad}^{1})$ with
 \beeq
T_{\ep}\ge
\begin{cases}
c \ep^{-\frac{2(p-1)}{2-(p-1)(n-1)}},\ 1<p<p_{G},\\
\exp(c \ep^{-(p-1)}),\ p=p_G\ .
\end{cases}
\eneq
 \end{proposition}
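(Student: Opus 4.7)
The plan is to adapt the radial Glassey iteration of \cite{hyw,HJWLax}---originally developed on $\gm=\gm_0$---to the small-perturbation asymptotically Euclidean setting, using the homogeneous local energy estimate of Lemma \ref{lem-LE} and the radial trace estimate of Lemma \ref{trace} as the core linear ingredients. I would work in the iteration norm
\[
\|u\|_{Z_T} := \|\nabla^{\le 1}\tilde\pa u\|_{\dot\ell^{-1/2}_\infty L^2_{t,x}([0,T]\times\R^n)} + \|\pa u\|_{L^\infty_t H^1_x([0,T])},
\]
which controls $(u,u_t)$ at the $H^2_{rad}\times H^1_{rad}$ level and simultaneously captures the KSS-type weighted space-time norms \eqref{11-2-6}. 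Applying Lemma \ref{lem-LE} to both $u$ and $\nabla u$---absorbing the commutator $[\nabla,\Delta_\gm]$ by the smallness \eqref{1029}---yields, for the linear map $\Pi u$ solving $(\pt^2-\Delta_\gm+b\pt)\Pi u=c_1|u_t|^p$ with data $(\ep u_0,\ep u_1)$,
\[
\|\Pi u\|_{Z_T}\les \ep(\|u_0\|_{H^2}+\|u_1\|_{H^1}) + \|\nabla^{\le 1}(|u_t|^p)\|_{N_T}.
\]

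The heart of the proof is then the nonlinear estimate
\[
\|\nabla^{\le 1}(|u_t|^p)\|_{N_T}\les \Phi_p(T)\,\|u\|_{Z_T}^p,\qquad
\Phi_p(T):=\begin{cases} T^{1-(n-1)(p-1)/2}, & 1<p<p_G,\\ \log(2+T), & p=p_G.\end{cases}
\]
To prove it I would split into $r\ge 1$ and $r\le 1$. In the exterior, Lemma \ref{trace} gives $|u_t(x)|\les r^{-(n-1)/2}\|u_t\|_{H^1_{rad}}$, hence $|u_t|^{p-1}\les r^{-(n-1)(p-1)/2}\|u\|_{Z_T}^{p-1}$ pointwise, and the remaining factor $|\nabla^{\le 1}u_t|$ is estimated via the $\dot\ell^{1/2}_1 L^2_{t,x}$ piece of $N_T$. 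A dyadic decomposition in $r$ combined with the weighted estimate \eqref{11-2-6} (with $\mu\in(0,1)$ chosen small enough that the dyadic sum converges) produces the stated $T^{1-(n-1)(p-1)/2}$ factor in the subcritical case; for the critical $p=p_G$ one uses instead the second, KSS-log, inequality in \eqref{11-2-6} and obtains $\log(2+T)$. In the interior $r\le 1$, the Hardy--Sobolev radial bound $|u_t|\les r^{-(n-2)/2}\|u_t\|_{H^1}$ (valid for $n\ge 3$) combined with the $L^1_T L^2_x$ piece of $N_T$ is strictly subcritical and produces no additional $T$-growth.

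Combining the two estimates, on the ball $\|u\|_{Z_T}\le 2C_1\ep$ the map $\Pi$ is a self-map provided $C_2\Phi_p(T)(2C_1\ep)^{p-1}\le 1/2$. A parallel difference estimate, using $||u_t|^p-|v_t|^p|\les (|u_t|^{p-1}+|v_t|^{p-1})|u_t-v_t|$ together with \eqref{LoES}, yields contraction at the $L^\infty_T L^2_x$ level and hence uniqueness in the class $C([0,T_\ep];H^2_{rad})\cap C^1([0,T_\ep];H^1_{rad})$. Solving $\Phi_p(T)\ep^{p-1}\le c_0$ then reproduces exactly $T_\ep\ge c\ep^{-2(p-1)/(2-(n-1)(p-1))}$ for $1<p<p_G$ and $T_\ep\ge \exp(c\ep^{-(p-1)})$ at $p=p_G$.

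The main obstacle is the dyadic/weighted nonlinear estimate: one must balance the weight $\mu$ in \eqref{11-2-6} against the dyadic exponent from $\dot\ell^{1/2}_1$ so that the sum converges and the resulting power of $T$ matches the critical Glassey one; recovering the logarithm at $p=p_G$ forces using the KSS-log rather than the KSS inequality at the decisive step. The radial hypothesis is essential: it is exactly what makes the pointwise decay $|u_t|\les r^{-(n-1)/2}\|u_t\|_{H^1}$ available with only $H^1$ regularity, and the metric smallness \eqref{1029} is what allows Lemma \ref{lem-LE} to apply.
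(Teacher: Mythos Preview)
Your overall strategy---iterate in $\|\nabla^{\le 1}\cdot\|_{X_T}$, apply Lemma~\ref{lem-LE} for the linear piece, close via a nonlinear estimate with growth factor $\Phi_p(T)$, then solve $\Phi_p(T)\ep^{p-1}\ll 1$ for the lifespan---is exactly the paper's, and the arithmetic at the end is correct.

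The gap is in your derivation of the nonlinear estimate. The claim that ``$\mu$ chosen small enough that the dyadic sum converges'' produces the sharp factor $T^{1-(n-1)(p-1)/2}$ does not hold. If you extract $|u_t|^{p-1}\les r^{-(n-1)(p-1)/2}\|u_t\|_{H^1}^{p-1}$ on $|x|\sim 2^j$, bound $\|\phi_j\nabla^{\le 1}u_t\|_{L^2_{T,x}}\les 2^{j\mu/2}T^{(1-\mu)/2}\|\nabla^{\le 1}u\|_{X_T}$ via \eqref{11-2-6}, and sum over $j\ge 0$, convergence of the geometric series forces $\mu<(n-1)(p-1)-1$; the resulting factor $T^{(1-\mu)/2}$ is then strictly larger than $T^{1-(n-1)(p-1)/2}$, and for $1<p\le n/(n-1)$ there is no admissible $\mu>0$ at all. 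Making $\mu$ small pushes the $T$-power toward $1/2$, away from the Glassey exponent, not toward it.

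The paper avoids both the dyadic sum and the $r\gtrless 1$ split. It fixes the \emph{specific} value $\mu=(n-1)(p-1)/2\in(0,1)$ for $p<p_G$ (respectively $\mu=(n-2)/(n-1)$ for $p=p_G$), uses the dual form of \eqref{11-2-6}, namely $\|F\|_{N_T}\les T^{(1-\mu)/2}\|r^{\mu/2}F\|_{L^2_{T,x}}$ (respectively its logarithmic variant), together with the full radial trace bound $\|r^{(n-1)/2}u_t\|_{L^\infty}\les\|r^{(n-2)/2}\langle r\rangle^{1/2}u_t\|_{L^\infty}\les\|u_t\|_{H^1_{rad}}$ from Lemma~\ref{trace}, which already handles small $r$. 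Writing $r^{\mu/2}|u_t|^{p-1}|\nabla^{\le 1}u_t|\le(r^{(n-1)/2}|u_t|)^{p-1}\cdot r^{-\mu/2}|\nabla^{\le 1}u_t|$ and applying \eqref{11-2-6} once more to the second factor gives $T^{(1-\mu)/2}\cdot T^{(1-\mu)/2}=T^{1-\mu}$, which is exactly the claimed $\Phi_p(T)$. Once you pick this correct $\mu$, your interior/exterior decomposition and dyadic summation become unnecessary.
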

\begin{proof}
We give the proof using the similar approach in
Proposition \ref{prop-Stri}.
For the operator $\Pi$ defined in
\eqref{11-3-1}, in view of the linear estimates
in Lemma \ref{lem-LE},
we have
$$\|\nabla^{\le 1} \Pi u\|_{X_{T}}\le
C\ep+C
\|\nabla^{\le 1}F(u)\|_{N_T}\ ,
$$
$$\|\Pi u-\Pi w\|_{X_T}\le C
\|F(u)-F(w)\|_{N_T}
\ .
$$
Then
we need only to prove the following nonlinear estimates for radial functions $u, w$,
\beeq\label{eq-LE1}
\|\nabla^{\le 1}|u_t|^p\|_{N_T}
\les A(T) \|\nabla^{\le 1}u\|_{X_T}^{p}\ ,
\eneq
\beeq\label{eq-LE2}
\||u_t|^p-|w_t|^p\|_{N_T}
\les 
A(T) \|\nabla^{\le 1} (u,w)\|_{X_T}^{p-1}
\| u-w\|_{X_T}
\ ,
\eneq
where
$$A(T)=\left\{
\begin{array}{ ll}
T^{1-\mu}      &  1<p<p_G, \mu=\frac{(p-1)(n-1)}{2}\in(0, 1) , \\
\ln(2+T)      &   p=p_G, \mu=\frac{n-2}{n-1}\ .
\end{array}
\right.$$
Notice that the lower bound of  $T_\ep$ is obtained from the requirement
$A(T_\ep) \ep^{p-1}\ll 1$, in the process of closing the iteration.

These nonlinear estimates are well-known, see, e.g., \cite{hyw} and \cite{HJWLax}.
For reader's convenience, we give a proof of
\eqref{eq-LE1} here.
By Lemma \ref{trace}, we have
$$\|r^{(n-1)/2} u_t\|_{L^\infty}
\les \|r^{(n-2)/2}\<r\>^{1/2} u_t\|_{L^\infty}
\les \|u_t\|_{H^1}\ .$$
When $p\in (1, p_G)$, we have
$\mu/(p-1)=(n-1)/2$ and so by \eqref{11-2-6}, we see that
$\|\nabla^{\le 1}|u_t|^p\|_{N_T}$ is controlled by
$$
 T^{(1-\mu)/2}
\|r^{\mu/2}\nabla^{\le 1}|u_t|^p\|_{L^2_{T,x}}
\les T^{1-\mu} \|r^{\mu/(p-1)} u_t\|_{L^\infty}^{p-1}
\|\nabla^{\le 1}u\|_{X_T}
\les A(T) \|\nabla^{\le 1}u\|_{X_T}^{p}\ .
$$
When $p=p_G$, we have
$\mu/(p-1)=(n-2)/2$ and then
\begin{eqnarray*}
 \|\nabla^{\le 1}|u_t|^p\|_{N_T}& \les & (\ln(2+T))^{1/2}
\|r^{\mu/2}\<r\>^{(1-\mu)/2}\nabla^{\le 1}|u_t|^p\|_{L^2_{T,x}}
 \\
 & \les & (\ln(2+T))
\|r^{\mu}\<r\>^{1-\mu}|u_t|^{p-1}\|_{L^\infty_{T,x}}
\|\nabla^{\le 1}u\|_{X_T}
\\
&\les &A(T) \|\nabla^{\le 1}u\|_{X_T}^{p}\ .
\end{eqnarray*}
This completes the proof.
\end{proof}

\subsection{Local well-posedness for $c_1c_2\neq 0$ in high dimension}When $n\geq 5$, it seems that the approach of using Strichartz estimate does not apply for small $p, q$. However, the local energy estimates in Lemma \ref{lem-LE} works well for some powers. In summary, we can show \eqref{nlw2} is locally well posed, when 
\beeq
\label{lo-mixed}
1<p<1+\frac{2}{n-2}, \ 1<q<1+\frac{4}{n-4}\ .
\eneq

\begin{prop}
Let $n\geq 5$ and $(p, q)$ in region \eqref{lo-mixed} and $c_1c_2\neq 0$. Consider \eqref{nlw2} on asymtotically Euclidean manifolds $\gm$ with $\gm=\gm_{1}$ satisfies \eqref{1029} and $\theta$ small enough. Then \eqref{nlw2} is locally well-posed in $H_{rad}^2\times H_{rad}^1$ for small enough $\ep$.
\end{prop}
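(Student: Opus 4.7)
The plan is to follow the contraction-mapping approach from Proposition \ref{lem-2.4}, but applied to both nonlinearities simultaneously. First, I would define the map $\Pi$ via the linear problem \eqref{11-3-1}, and apply the first-order local energy estimate \eqref{LoES2} from Lemma \ref{lem-LE} to obtain
$$\|\nabla^{\le 1}\Pi u\|_{X_T}\le C\ep+C\|\nabla^{\le 1}F(u)\|_{N_T},\qquad F(u)=c_1|u_t|^p+c_2|u|^q,$$
together with the difference bound $\|\Pi u-\Pi w\|_{X_T}\le C\|F(u)-F(w)\|_{N_T}$ from \eqref{LoES}. The whole question then reduces to proving the two nonlinear estimates
\begin{align*}
\|\nabla^{\le 1}F(u)\|_{N_T}&\les A(T)\bigl(\|\nabla^{\le 1}u\|_{X_T}^p+\|\nabla^{\le 1}u\|_{X_T}^q\bigr),\\
\|F(u)-F(w)\|_{N_T}&\les A(T)\|\nabla^{\le 1}(u,w)\|_{X_T}^{\min(p,q)-1}\|u-w\|_{X_T},
\end{align*}
with $A(T)\to 0$ as $T\to 0$.

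For the $L^1_T L^2_x$ component of $N_T$, standard Sobolev embeddings close the bare nonlinearity immediately: $H_{rad}^2\hookrightarrow L^{2q}$ for $q<n/(n-4)=1+4/(n-4)$ gives $\||u|^q\|_{L^1 L^2}\les T\|u\|_{L^\infty H^2}^q$, and likewise $H_{rad}^1\hookrightarrow L^{2p}$ for $p<n/(n-2)=1+2/(n-2)$ handles $|u_t|^p$. The delicate point is the derivative terms $\nabla|u|^q=q|u|^{q-1}\nabla u$ and $\nabla|u_t|^p=p|u_t|^{p-1}\nabla u_t$, which are not controlled by the same Sobolev chain at the upper endpoint. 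For these I would dyadically decompose in space, use the radial trace of Lemma \ref{trace} (in the form $|v(r)|\les r^{-(n/2-s)}\<r\>^{-(s-1/2)}\|v\|_{H^s_{rad}}$ on each annulus with $s=2$ for $v=u$ and $s=1$ for $v=u_t$), and combine with the KSS-type weighted bound $T^{(\mu-1)/2}\|r^{-\mu/2}\tilde\pa u\|_{L^2_{T,x}}\les\|u\|_{X_T}$ from \eqref{11-2-6}, matching the resulting dyadic sum against the $\dot\ell_1^{1/2}L^2_{t,x}$ component of $N_T$.

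The main obstacle is precisely this last step: matching dyadic scales so that both the near-origin trace behavior ($r^{-(n-4)/2}$ from $H^2_{rad}$, $r^{-(n-2)/2}$ from $H^1_{rad}$) and the far-field trace behavior ($r^{-(n-1)/2}$) are simultaneously controlled by the $r^{-\mu/2}$-weighted local energy norm with a single admissible $\mu\in(0,1)$. The window \eqref{lo-mixed} for $(p,q)$ is exactly the one in which such a $\mu$ exists and all dyadic sums converge; any larger window would fail summability either at the origin or at infinity. Once the nonlinear bounds are established, the Lipschitz version follows by the telescoping identities $|u|^q-|w|^q=q(u-w)\int_0^1|\theta u+(1-\theta)w|^{q-1}d\theta$ and its analogue for $|u_t|^p-|w_t|^p$, followed by another application of the same machinery, and the contraction closes for $T$ sufficiently small. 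The damping term $b(t)\in L^1(\R_+)$ is absorbed into Lemma \ref{lem-LE} exactly as in the earlier propositions.
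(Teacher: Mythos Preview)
Your strategy is correct and coincides with the paper's: contraction via the local energy estimates \eqref{LoES}--\eqref{LoES2}, controlling $|u_t|^p$ exactly as in Proposition~\ref{lem-2.4} (the paper only notes that for $p_G<p<1+2/(n-2)$ one takes $\mu=(n-2)(p-1)/2$ instead of $(n-1)(p-1)/2$), and controlling $|u|^q$ with Sobolev and radial trace estimates. Two points where the paper's execution differs from your sketch are worth flagging.

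First, rather than a unified dyadic scheme for $\nabla^{\le1}|u|^q$, the paper splits into two overlapping $q$-ranges. For $1<q\le 1+\frac{2}{n-4}$ the derivative term is handled by pure H\"older--Sobolev, $\||u|^{q-1}\nabla u\|_{L^2}\les \|\nabla u\|_{L^{2n/(n-2)}}\|u\|_{L^{n(q-1)}}^{q-1}\les \|u\|_{H^2}^q$, since $n/2-1/(q-1)\le 2$ there. For $q>1+\frac{2}{n-1}$ the paper uses the weighted route you describe, but with a clean split $r^{\mu/2}|u|^{q-1}|\nabla^{\le1}u|\le (r^{(\mu+1)/(q-1)}|u|)^{q-1}\cdot r^{-1-\mu/2}|\nabla^{\le1}u|$, placing the first factor in $L^\infty$ via the homogeneous trace (which needs $n/2-(\mu+1)/(q-1)\in(1/2,2]$) and the second in $L^2_{T,x}$ via \eqref{11-2-6}. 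A single $\mu$ satisfying both constraints exists precisely when $1+2/(n-1)<q<1+4/(n-4)$, and the two ranges together cover \eqref{lo-mixed}. This avoids any dyadic bookkeeping.

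Second, you invoke $\|u\|_{L^\infty_T H^2}$ freely, but $\|\nabla^{\le1}u\|_{X_T}$ only controls $\tilde\pa u$ and $\tilde\pa\nabla u$ in $L^\infty_T L^2$, hence $\|\nabla u\|_{L^\infty H^1}$, not $\|u\|_{L^\infty L^2}$. The paper closes this by the Newton--Leibniz step $\|u(t)\|_{L^2}\le \|u_0\|_{L^2}+T\|u_t\|_{L^\infty L^2}\le \ep+T\|u\|_{X_T}$, which you should make explicit before appealing to $H^2\hookrightarrow L^{2q}$ or to the $H^2$ trace bound.
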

\begin{proof}
The proof follows the similar way in Proposition \ref{lem-2.4}, while for $p_G<p<1+\frac{2}{n-2}$, we shall make a slightly modification of $\mu$, that is, we set $\mu=\frac{(n-2)(p-1)}{2}\in (0, 1)$ and in this case, we see that 
\begin{align*}
 \|\nabla^{\le 1}|u_t|^p\|_{N_T}\les &T^{(1-\mu)/2}
\|r^{\mu/2}\nabla^{\le 1}|u_t|^p\|_{L^2_{T,x}}\\
\les &T^{1-\mu} \|r^{(n-2)/2} u_t\|_{L^\infty}^{p-1}
\|\nabla^{\le 1}u\|_{X_T}
\les T^{1-\mu} \|\nabla^{\le 1}u\|_{X_T}^{p}\ .
\end{align*}

 So we need only to show how to control the additional norm of $|u|^q$. With $\|\nabla^{\leq1} u\|_{X_T}\les \ep$, we claim that 
\beeq
\label{JY4}
\|\nabla^{\leq1}|u|^q\|_{N_T}\les\ \ep^{q}\ll \ep \ ,
\eneq
for some small $T$ and $\ep$. 

On the one hand, by H\"older's inequality, we have  
$$\|\nabla^{\leq1}|u|^q\|_{N_T}\leq \|\nabla^{\leq1}|u|^q\|_{L_{T}^1L_{x}^2}\les T(\|u\|^q_{L_{T}^\infty L^{2q}}+\||u|^{q-1}\nabla u\|_{L_{T}^\infty L^2})\ .$$
Notice that $\frac{n}{2}-\frac{1}{q-1}, \frac{n}{2}-\frac{n}{2q}\leq 2$ when $1<q\leq 1+\frac{2}{n-4}$, then by Sobolev embedding, we get 
$$\|u\|_{L_x^{2q}}\les \|u\|_{\dot{H}^{\frac{n}{2}-\frac{n}{2q}}}\les \|u\|_{H^2}, $$
$$
\||u|^{q-1}\nabla u\|_{L_x^2}\les \|\nabla u\|_{L^{\frac{2n}{n-2}}}\|u\|^{q-1}_{L^{n(q-1)}}\les\|u\|_{\dot{H}^2}\|u\|^{q-1}_{\dot{H}^{\frac{n}{2}-\frac{1}{q-1}}}\les \|u\|^q_{H^2}\ .$$
For the $L^2$ norm of $u$, by Newton-Leibniz formula, we have 
$$\|u\|_{L_{T}^{\infty}L_{x}^2}\leq T\|u_t\|_{L_{T}^{\infty}L_{x}^2}+\|u_0\|_{L^2}\leq T\|u\|_{X_T}+\ep\ .$$
 
 On the other hand, we also have 
  $$\|\nabla^{\leq 1} |u|^q\|_{N_T}\les T^{\frac{1-\mu}{2}}\|r^{\frac{\mu}{2}}|u|^{q-1}\nabla^{\leq 1}u\|_{L_{T,x}^{2}}\ .$$
By H\"older's inequality, we get 
$$\|r^{\frac{\mu}{2}}|u|^{q-1}\nabla^{\leq 1}u\|_{L_{T,x}^{2}}\les\|r^{\frac{\mu+1}{q-1}}u\|^{q-1}_{L_{t,x}^{\infty}}\|r^{-1-\frac{\mu}{2}}\nabla^{\leq 1}u\|_{L_{T,x}^{2}}\ .$$
Note that $\frac{n}{2}-\frac{\mu+1}{q-1}\in (\frac{1}{2}, 2]$ if $1+\frac{2}{n-1}<q<1+\frac{4}{n-4}$, then by applying trace estimates, we obtain that
$$\|r^{\frac{\mu+1}{q-1}}u\|_{L_x^\infty}\les \|u\|_{\dot{H}^{\frac{n}{2}-\frac{\mu+2}{q-1}}}\les \|u\|_{H^2}\ .$$
Thus we have
$$\|\nabla^{\leq 1} |u|^q\|_{N_T}\les \ T^{1-\mu}\|u\|^{q-1}_{L_{T}^{\infty}H^2}\ep\ \les \ \ep^{q}\ll\ep\ ,$$
if we take $T$ small enough.
\end{proof}

\section{Finite time blow up of \eqref{nlw2} with $c_1>0$}\label{section3}
In this section we give the proof of Theorem \ref{main-a-g},
in the case $c_1>0$ with estimates \eqref{main-es},
 under the assumption there is a local solution $u\in C^{2}([0, T_\ep);\mathcal{D}'(\R^{n}))$ with $u_{t}\in L_{loc,t,x}^{p}([0, T_{\ep})\times\R^{n})$ and $|u|^{q}\in C([0, T_\ep);\mathcal{D}'(\R^{n}))$ satisfies finite speed of propagation. And based on the local well-posedness in Section 2, we are ready to prove the finite time blows up in Theorem \ref{main-2}-Theorem \ref{main-5} without these assumption. 

By the transformation \eqref{bianhuan}, for future reference, we record that 
\beeq\label{eq-change}
\eta'(t)=m(\eta(t))=\tilde m(t),\ 
\tilde m'(t)=b(\eta(t))\tilde m^2(t),\ 
\eta(0)=0, m(0)=\tilde m(0)=1\ .
\eneq

\subsection{Elliptic equation $\Delta_{\gm}\phi=\lambda^{2}\phi$}
As we discussed in introduction, in our setting, \eqref{H1} is assumed to be true for some fixed $\la>0$ when $\gm=\gm_1+\gm_2$ and we have \eqref{H1} when $\gm=\gm_1+\gm_3$.  Then we will exploit it to construct the solution of linear wave equation 
\beeq
\label{lin-part}
\pa^2_t u-\tilde{m}^2\Delta_{\gm} u=0\ .
\eneq

\subsection{Test function $\psi(t, x)$}
As usual, we want to find a nonnegative solution $\psi(t, x)$ for \eqref{lin-part} by methods of separation of variables. Let $\psi(t, x)=\varphi(t)\phi(x)$. Then we are reduced to consider the following ordinary differential equation:
\beeq
\label{levi}
\varphi''-\la^2\tilde{m}^{2}(t)\varphi=0\ .
\eneq
By applying Levinson theorem (see, e.g., 
\cite[Chapter 3, Theorem 8.1]{CoLe55}), there exist a $T>0$ such that 
\beeq
\label{lev-proof}
\varphi\simeq  
e^{-\lambda\int^{t}_{t_0}\tilde{m}(\tau)d\tau}
\simeq   e^{-\lambda \eta(t)}
\ , \forall t\geq T\ ,
\eneq
\beeq
\label{lev-proof1}
\varphi'\simeq  
-\la k e^{-\lambda\int^{t}_{t_0}\tilde{m}(\tau)d\tau}
\simeq   -\la k e^{-\lambda \eta(t)}
\ , \forall t\geq T\ ,
\eneq
where
$k=\tilde{m}(\infty)=\exp(\int_0^\infty b(t)dt)$. In fact, let $Y(t)=(Y_{1}(t), Y_{2}(t))^{T}$ and $Y_{1}=\varphi$, $Y_{2}=\varphi'$, then we have 
$$Y'=(A+V(t))Y,
A=\left(\begin{array}{cc}0 & 1 \\ \lambda^{2}k^2 & 0\end{array}\right),
V(t)=\left(\begin{array}{cc}0 & 0 \\ \lambda^{2}(\tilde{m}^{2}(t)-k^2) & 0\end{array}\right)\ .$$
Noticing that $V'\in L^1$ and $\lim_{t\to\infty} V(t)=0$, we could apply the Levinson theorem to the system. Then there exists $t_0\in [0,\infty)$ so that we have a solution, which have the asymptotic form
when $t\geq T$,
\beeq
\label{K1}
Y(t)=
\left(\begin{array}{c}1+o(1) \\ 
-\lambda k+o(1)\end{array}\right)
e^{-\lambda\int^{t}_{t_0}\tilde{m}(\tau)d\tau},
\eneq
which gives us \eqref{lev-proof}-\eqref{lev-proof1}.

\begin{lem}
Let $\nu(t)=-\varphi'/\varphi$, then there exists $\delta_2\in (0, 1)$, such that for any $t\geq 0$, we have 
\beeq
\la\delta_2\leq\nu(t)\leq \la/\delta_2\  .
\eneq
\end{lem}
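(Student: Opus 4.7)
My plan is to study $\nu$ via the Riccati equation it satisfies, combine the Levinson asymptotics \eqref{lev-proof}--\eqref{lev-proof1} at infinity with a backward ODE continuation down to $t=0$, and then conclude by continuity on a compact interval together with the positive limit at infinity. Differentiating $\nu=-\varphi'/\varphi$ and using \eqref{levi} yields the Riccati equation
\[
\nu'(t) = \nu(t)^2 - \la^2 \tilde m^2(t).
\]
Since $b\in L^1(\R_+)$, the factor $m(t)=\exp\int_0^t b(\tau)\,d\tau$ is bounded between two positive constants, and hence so is $\tilde m=m\circ\eta$; write $\tilde m_-\le\tilde m(t)\le\tilde m_+$. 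The asymptotics then furnish some $T>0$ on which $\varphi>0$, $\varphi'<0$, and $\nu=\la k+o(1)$ with $k=\tilde m(\infty)\in[\tilde m_-,\tilde m_+]$; in particular $\nu$ is continuous on $[T,\infty)$ and bounded there between positive multiples of $\la$.

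The core of the argument is the backward propagation of $\varphi>0$ and $\varphi'<0$ from $T$ down to $0$. I would set
\[
t_1 = \inf\bigl\{s\in[0,T] : \varphi>0 \text{ and } \varphi'<0 \text{ on } [s,T]\bigr\}
\]
and argue by contradiction that $t_1>0$ is impossible. By continuity, $\varphi(t_1)\ge 0$ and $\varphi'(t_1)\le 0$, with at least one equality holding (otherwise a small neighborhood to the left of $t_1$ would lie in the set defining $t_1$). If $\varphi(t_1)=0$ and $\varphi'(t_1)<0$, then $\varphi$ becomes negative immediately to the right of $t_1$, contradicting $\varphi>0$ on $(t_1,T]$. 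If $\varphi(t_1)=\varphi'(t_1)=0$, uniqueness for \eqref{levi} forces $\varphi\equiv 0$, contradicting the Levinson asymptotics. If $\varphi'(t_1)=0$ and $\varphi(t_1)>0$, then \eqref{levi} gives $\varphi''(t_1)=\la^2\tilde m^2(t_1)\varphi(t_1)>0$, so $\varphi'$ is strictly increasing at $t_1$ and turns positive immediately to the right, again a contradiction. Hence $t_1=0$, which makes $\nu$ continuous and strictly positive on all of $[0,\infty)$.

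To finish, a continuous strictly positive function on $[0,\infty)$ with positive finite limit $\la k$ at infinity is bounded above and below by positive multiples of $\la$, so choosing $\delta_2\in(0,1)$ small enough yields the stated two-sided bound. The only delicate step is the backward extension in the middle paragraph; it is handled cleanly because on $\{\varphi\ge 0\}$ the function $\varphi$ is convex by \eqref{levi}, which together with ODE uniqueness rules out each of the three possible degenerations at $t_1$.
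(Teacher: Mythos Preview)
Your proof is correct and follows essentially the same approach as the paper: use the Levinson asymptotics to control $\nu$ for large $t$, propagate the sign conditions $\varphi>0,\ \varphi'<0$ backward to $t=0$, and finish by compactness plus the positive limit at infinity. The only cosmetic differences are that the paper states the backward step more tersely (directly asserting $\varphi'(t)\le\varphi'(N)<0$ and $\varphi(t)\ge\varphi(N)>0$ from convexity, rather than your three-case analysis at $t_1$), and that your Riccati equation is introduced but not actually used in the argument.
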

\begin{proof}
By \eqref{K1}, we have
$$\nu(t)=-\frac{\varphi'}{\varphi}\to \la k, \ t\to \infty,$$
and there exists a $N>T$, such that 
$$
\nu(t)\in [\frac{\la k}{2}, 2\la k], \ t\geq N\ .
$$
For the remaining interval
$[0, N]$,
with data $\varphi(N)>0$, $\varphi'(N)<0$ at $t=N$, we know from the
 equation \eqref{levi} that
 $$\varphi'(t)\le \varphi'(N)<0,\ 
 \varphi(t)\ge \varphi(N)>0, \ 
 \forall t\in [0, N]\ .
 $$
Since $\nu\in C^{1}[0, N]$,
it tells us that
$\nu(t)>0$ and so 
$\nu(t)\sim 1$ on the compact interval
$[0, N]$. This completes the proof.
\end{proof}

\subsection{Finite speed of propagation}
For linear wave equation $\pa^{2}_{t}u-\Delta_{\gm_1}u+b(t)u_{t}=0$ with the initial data \eqref{hs2} the support of solution $u$ is 
\beeq
\label{fspyl}
\supp u\subset \{(t, x); \int^{|x|}_{0}K(\tau)d\tau\leq t+R_{1}\},R_{1}\geq\int_{0}^{R_{0}}K(\tau)d\tau\ .
\eneq
As $\gm_2$ is short-range perturbation, which does not affect speed of propagation too much,
we still have \eqref{fspyl}, with possibly bigger $R_1$ to \eqref{nlw2}. Thus with the change of variable $t\to \eta(t)$, the support of solution $u$ of \eqref{bianhuan1} is
\beeq
\label{supp01}
\supp u\subset \{(t, x); \int^{|x|}_{0}K(\tau)d\tau\leq \eta(t)+R_{1}\}=D\ .
\eneq
Recall that $m(t)\in [\delta_{1}, 1/\delta_{1}]$, by \eqref{eq-change} we have 
\beeq
D\subset\{(t,x); |x|\leq \frac{t}{\delta_{0}\delta_{1}}+\frac{R_{1}}{\delta_{0}}\}=\tilde{D}\ .
\eneq

\begin{lem}
\label{lemma3.1}
Let $q\geq1$, then we have
$$\int_{D}\psi^{q}d v_{\gm} \les (t+1)^{n-1-\frac{n-1}{2}q}\ .$$
\end{lem}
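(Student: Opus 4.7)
The plan is to exploit the product structure $\psi(t,x)=\varphi(t)\phi(x)$ coming from the separation of variables. From the Levinson analysis just established we have the upper bound $0 \le \varphi(t) \lesssim e^{-\lambda\eta(t)}$ on $[T,\infty)$, and $\varphi$ is bounded and positive on $[0,T]$, so the bound $\varphi(t)\lesssim e^{-\lambda\eta(t)}$ holds uniformly on $[0,\infty)$. Combining this with hypothesis \eqref{H1} gives, for every $(t,x)\in D$,
\[
\psi(t,x)^q \lesssim e^{-q\lambda\eta(t)}\,\langle\lambda r\rangle^{-(n-1)q/2}\, e^{q\lambda\int_0^r K(\tau)d\tau}.
\]

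Next I would pass to polar coordinates. Because $(\R^n,\gm)$ satisfies the uniform ellipticity \eqref{unelp} with $\gm_2$ short range, the volume element $dv_\gm$ is comparable to $K(r)r^{n-1}\,dr\,d\omega$, and since $K\in(\delta_0,1/\delta_0)$ this is in turn comparable to $r^{n-1}\,dr\,d\omega$. The time-$t$ slice of $D$ is $\{|x|\le R(t)\}$ with $\int_0^{R(t)}K(\tau)d\tau = \eta(t)+R_1$. Changing variables $s=\int_0^r K(\tau)d\tau$ (so that $ds=K(r)\,dr$ and $r\sim s$) reduces the estimate to bounding
\[
\int_D \psi^q\, dv_\gm \;\lesssim\; e^{-q\lambda\eta(t)} \int_0^{\eta(t)+R_1} \langle\lambda s\rangle^{-(n-1)q/2}\, e^{q\lambda s}\, s^{n-1}\, ds.
\]

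For the one-dimensional integral I would use the standard fact that $\int_0^S s^a e^{q\lambda s}\, ds \lesssim (1+S)^a e^{q\lambda S}$ (with $a = n-1-(n-1)q/2$, which is non-positive for $q\ge 1$ but the singularity is harmless since the $\langle \lambda s\rangle^{-(n-1)q/2}$ factor is bounded by $1$ near $s=0$, leaving $s^{n-1}$ which is integrable). Applied with $S=\eta(t)+R_1$, this gives
\[
\int_D \psi^q\, dv_\gm \;\lesssim\; e^{-q\lambda\eta(t)}\,(1+\eta(t))^{n-1-(n-1)q/2}\, e^{q\lambda(\eta(t)+R_1)}\;\lesssim\;(1+\eta(t))^{n-1-(n-1)q/2}.
\]
Since $\tilde m\in[\delta_1,1/\delta_1]$ by \eqref{eq-change}, we have $\eta(t)\sim t$, yielding the claimed bound $(t+1)^{n-1-(n-1)q/2}$.

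The crucial structural point, rather than any real technical obstacle, is that the exponential growth $e^{q\lambda\int_0^r K\,d\tau}$ inherited from the eigenfunction $\phi$ is exactly cancelled by $\varphi(t)^q \sim e^{-q\lambda\eta(t)}$ on the support $D$: this cancellation is dictated by the matching of the eigenvalue equation $\Delta_\gm\phi=\lambda^2\phi$ with the ODE \eqref{levi}, and is what isolates the polynomial factor $(t+1)^{n-1-(n-1)q/2}$ that drives the test-function argument in the subsequent blow-up proof.
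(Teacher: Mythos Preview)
Your proposal is correct and follows essentially the same route as the paper: bound $\varphi(t)\lesssim e^{-\lambda\eta(t)}$, invoke \eqref{H1}, pass to the radial variable $s=\int_0^r K(\tau)\,d\tau$, and reduce to a one-dimensional integral. The only cosmetic difference is that the paper proves your ``standard fact'' $\int_0^S (1+s)^a e^{q\lambda s}\,ds\lesssim (1+S)^a e^{q\lambda S}$ by an explicit split $D=D_1\cup D_2$ at the half-radius $s=(\eta(t)+R_1)/2$, whereas you invoke it directly; both arguments are the same in content.
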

\begin{proof}
We divide the region $D$ into two disjoint parts: $D=D_{1}\cup D_{2}$ where
$$D_{1}=\{(t, x); \int^{|x|}_{0}K(\tau)d\tau\leq \frac{\eta(t)+R_{1}}{2}\}\ .$$
For the region $D_{1}$, by \eqref{H1}, we have 
$$\int_{D_{1}}\psi^{q} d v_{\gm}\les e^{-\la\eta(t)q}\int_{D_{1}}(1+\la|x|)^{-\frac{n-1}{2}q}e^{q\la\int^{|x|}_{0}K(\tau)d\tau}d v_{\gm}\ .$$
Let $\tilde{r}=\int^{|x|}_{0}K(\tau)d\tau$, then $d\tilde{r}=K(r)dr$ and $\delta_{0}r\leq \tilde{r}\leq r/\delta_{0}$ since $K\in[\delta_{0}, 1/\delta_{0}]$. Then we get 
\begin{align*}
\int_{D_{1}}\psi^{q} d v_{\gm}&\les e^{-\la\eta(t)q}\int^{\frac{\eta(t)+R_{1}}{2}}_{0}(1+\tilde{r})^{n-1-\frac{n-1}{2}q}e^{q\la\tilde{r}}d\tilde{r}\\
&\les e^{-\la\eta(t)q}\int^{\frac{\eta(t)+R_{1}}{2}}_{0} e^{\frac{3}{2}q\la\tilde{r}}d\tilde{r}\\
&\les e^{-\frac{\la}{4}\eta(t)} \les (1+t)^{n-1-\frac{n-1}{2}q},
\end{align*}
where we have used the fact that $e^{-t}$ decays faster than any polynomial. 
For the region $D_{2}$, it is easy to see
\begin{align*}
\int_{D_{2}}\psi^{q} d v_{\gm}&\les e^{-\la\eta(t)q}\int^{\eta(t)+R_{1}}_{\frac{\eta(t)+R_{1}}{2}}e^{q\la\tilde{r}}(1+\tilde{r})^{n-1-\frac{n-1}{2}q}d\tilde{r}\\
&\les (1+\eta(t))^{n-1-\frac{n-1}{2}q}\int^{\eta(t)+R_{1}}_{\frac{\eta(t)+R_{1}}{2}}e^{q\la\tilde{r}-\la\eta(t)q}d\tilde{r}\\
&\les (1+t)^{n-1-\frac{n-1}{2}q}\ ,
\end{align*}
which completes the proof.
\end{proof}

\subsection{Proof of  \eqref{main-es}} For simplicity, we first define some auxiliary functions 
$$F(t)=\int_{\R^{n}}u_{t}(t, x)\psi(t, x)d v_{\gm}\ ,$$
$$G(t)=\int_{\R^{n}}u(t, x)\psi(t, x)d v_{\gm}\ ,$$
$$H(t)=c_1\int_{\R^{n}}\tilde{m}^{2-p}(t)|u_{t}|^{p}\psi(t, x)d v_{\gm}\ ,$$
which are well-defind in the setting of Theorem \ref{main-2}-\ref{main-5}.
  In the setting of Theorem \ref{main-a-g}, for fixed $t$, $u, u_{t}, |u_{t}|^{p}, |u|^q$ could be viewed as distribution with compact support, thanks to the finite speed of propagation. 
As a consequence, we see that $F, G\in C^1([0, T_\ep)$.
Since $c_1, c_2\ge 0$,
both $c_1 \tilde{m}^{2-p}(t)|u_{t}|^{p}$
 and $c_2 \tilde{m}^{2}(t)|u|^{q}$ are non-negative distribution with compact support,
which particularly gives us
$H(t)\ge 0$, as $\psi\ge 0$.

Using $\psi, \psi_t\in C^\infty$ as the test functions, 
we know that
$\<u_{t},\psi\>-\<u , \psi_{t}\>\in C^1$
and
 we get from  \eqref{bianhuan1}, $\psi\ge 0$ and
 $\pt^2\psi=\Delta_\gm \psi$
  that
\begin{align*}
\frac{d}{dt}\int \big(u_{t}\psi-u \psi_{t}\big) d v_{\gm} 
=&\int \big(u_{tt}\psi-u \psi_{tt}\big) d v_{\gm}\\
=&\int \big(u_{tt}\psi-\tilde{m}^2 u\Delta_{\gm}\psi \big) d v_{\gm}
=\int \big(u_{tt}-\tilde{m}^2\Delta_{\gm}u\big)  \psi d v_{\gm}\\
\ge &c_1\int \tilde{m}^{2-p}(t)|u_{t}|^{p}\psi d v_{\gm}\ ,
\end{align*}
and so
\beeq
\label{eq-920}
(F+\nu G)'\geq H .
\eneq
Then we get
\beeq
\label{920}
F+\nu G\geq F(0)+\nu(0)G(0)+\int^{t}_{0}H(\tau) d\tau \geq 0\ ,
\eneq
thanks to the assumption \eqref{hs2} on the initial data.
Notice that 
\begin{align*}
(\nu G)'=-\frac{d}{dt}\int u\psi_{t} d v_{\gm}=&-\Big(\int u_{t}\psi_{t} d v_{\gm}+\int u \psi_{tt} d v_{\gm}\Big)\\
=&\nu \int u_{t}\psi d v_{\gm}-\la^2\tilde{m}^{2}\int u \psi d v_{\gm}\\
=&\nu F-\la^{2}\tilde{m}^{2}G\ .
\end{align*}
We get also from \eqref{eq-920} that
\beeq
\label{920-1}
F'+\nu F-\la^{2}\tilde{m}^{2}G\geq H\ .
\eneq
Based on \eqref{920} and \eqref{920-1},
it is easy to see that $$F(t)\geq 0\ , G(t)\geq 0\ , \forall t\geq 0\ .$$
Actually, recall that $G'=\int u_t \psi +u\psi_t d v_{\gm}=F-\nu G$, we have
$$G'+2\nu G=F+\nu G\geq 0$$
due to \eqref{920}, which gives us $G\geq e^{-\int^{t}_{0}2\nu d\tau}G(0)\geq 0$. Then, by \eqref{920-1}, we see that $F'+\nu F\geq 0$, which yields $F\geq 0$. 

Next, by multiplying $B_1=\la\delta^{2}_{1}\delta_2$ to \eqref{920} and adding it with \eqref{920-1}, we obtain
$$ F'+(\nu+B_1)F\geq (\la^{2}\tilde{m}^{2}-B_1\nu)G+B_1F(0)+B_1\nu(0)G(0)+B_1\int^{t}_{0} H d\tau+H\ .$$
Note that $\nu\leq \la/\delta_{2}$, $\tilde{m}\geq \delta_{1}$ then $\la^{2}\tilde{m}^{2}-B_1\nu\geq0$ and 
\beeq
\label{BSD}
F'+(\frac{\la}{\delta_2}+B_1)F\geq B_1F(0)+B_1\nu(0)G(0)+B_1\int^{t}_{0} H d\tau+H\ .
\eneq
Let $B_2=B_1/\big((\la/\delta_2) +B_1\big)<1$,
$L(t):=F(t)-I(t)$ with increasing function
\beeq
I(t):=B_2\int^{t}_{0} H d\tau+B_2F(0)+\frac{B_2}{2}\nu(0)G(0)\ ,\eneq
then $L'=F'-B_2H$, $L(0)=(1-B_2)F(0)-\frac{B_2}{2}\nu(0)G(0)\geq -\frac{B_2}{2}\nu(0)G(0)$ and 
\begin{align*}L'+\frac{B_1}{B_2}L=&F'+\frac{B_1}{B_2}F-B_2H-B_1\int^{t}_{0} H d\tau-B_1F(0)-\frac{B_1}{2}\nu(0)G(0)\\
\geq& (1-B_2)H+\frac{B_1}{2}\nu(0)G(0)\geq \frac{B_1}{2}\nu(0)G(0)\ ,
\end{align*}
thanks to \eqref{BSD}. Thus we have 
\begin{align*}
L(t)&\geq e^{-\frac{B_1}{B_2}t}L(0)+e^{-\frac{B_1}{B_2}t}\frac{B_1}{2}\nu(0)G(0)\int^{t}_{0}e^{\frac{B_1}{B_2}\tau}d\tau\\
&\geq \frac{B_2}{2}\nu(0)G(0)\big(1-2e^{-\frac{B_1}{B_2}t}\big)\geq 0,
\end{align*}
when $t\geq t_{0}=\frac{B_2}{B_1}\ln 2$, which yields 
$$I(t)\leq F(t)=\int u_{t}\psi d v_{\gm}\ ,\  \forall t\geq t_0\ .$$
By H\"older's inequality and Lemma \ref{lemma3.1} with $q=1$, we get
$$I(t)\les \Big(\int |u_{t}|^{p}\psi d v_{\gm}\Big)^{1/p}\Big(\int_{D} \psi d v_{\gm}\Big)^{1/p'}
\les
H^{1/p}
(t+1)^{\frac{n-1}{2p'}}
, \ \forall t\geq t_0\ ,$$
which gives us the following ordinary differential inequality  for $I(t)$
\beeq\label{eq-bu1}
I'(t) =B_2H(t)\gtrsim \frac{I^{p}}{(1+t)^{\frac{n-1}{2}(p-1)}},\ \forall t\geq t_0\ .\eneq
On the other hand,
as $I'(t)=B_2 H(t)\geq 0$ and the initial data $(u_0, u_1)$ are nontrivial
with \eqref{hs2},
we know that 
\beeq\label{eq-bu2} I(t_0)\geq I(0)=B_2\ep\int u_1\phi d v_{\gm}+\frac{B_2}{2}\nu(0)\ep\int u_0\phi d v_{\gm} \sim \ep\ .\eneq
 
With help of \eqref{eq-bu1} and
\eqref{eq-bu2}, it is easy to conclude the proof of  Theorem \ref{main-a-g} for the case
$c_1>0$ with estimates \eqref{main-es}.
Actually,
when $\frac{n-1}{2}(p-1)=1$, that is, $p=p_G$, we see that for some positive constants $C$, $\ep_0$, such that for any $\ep\in(0, \ep_0)$, we have 
$$I(t)\geq \Big(C\ep^{1-p}-(p-1)\ln(1+t)\Big)^{-1/(p-1)},$$
 which will go to $\infty$ as $C\ep^{1-p}-(p-1)\ln(1+t)\to 0$. Thus we have the existence time $T_\ep$ of $I(t)$ satisfies
$$T_\ep \leq \exp(\tilde{C}\ep^{-(p-1)})\ ,$$
for some $\tilde{C}>0$. Similarly, when $\frac{n-1}{2}(p-1)<1$, that is, $p<p_G$, we get
$$I(t)\geq \Big(C\ep^{1-p}-
\frac{p-1}{1-(n-1)(p-1)/2}
(1+t)^{1-(n-1)(p-1)/2}\Big)^{-1/(p-1)}\ .$$
Hence the existence time $T_\ep$ satisfies
$$T_\ep \leq C_0\ep^{-\frac{2(p-1)}{2-(n-1)(p-1)}},$$
for some $C_0>0$ and small enough $\ep$.

\section{Finite time blow up of \eqref{nlw2} with $c_1c_2\neq0$}
In this section, we present a proof of \eqref{mixed-lifespan1},
for \eqref{nlw2} with $c_1c_2\neq0$.
For that purpose, we introduce the function
$$F(t)=\int u(t, x) d v_{\gm}\ ,$$
which, as we shall see, will blow up in finite time. 

In view of \eqref{bianhuan1}, we have 
\beeq
\label{P-11}
F''=\int c_1\tilde{m}^{2-p}(t)|u_{t}|^{p}+c_2\tilde{m}^{2}(t)|u|^{q} d v_{\gm}\gtrsim\int |u|^{q} d v_{\gm}.
\eneq
By H\"older's inequality, we have 
$$|F|\leq \int_{\tilde{D}}|u| dv_{\gm}\les \Big(\int_{\tilde{D}}|u|^{q} dv_{\gm}\Big)^{1/q}(1+t)^{n/q'},$$
which,
combined with \eqref{P-11},
gives us
$$
F''\gtrsim |F|^{q}(1+t)^{-n(q-1)}\ .
$$
Since $F''\geq 0$, we have $F'(t)\geq F'(0)\geq 0 $ and $F(t)\geq F(0)+tF'(0)\geq 0$. Then we get
\beeq
\label{ODE-1}
F''\gtrsim |F|^{q}(1+t)^{-n(q-1)}\ , F(t)\geq F(0)+tF'(0)\geq 0, \ \forall t\geq 0\ .
\eneq

To show blow up, we 
need to
get better lower bound of $F$, for which purpose, we use the test function $$\tilde{\psi}(t, x)=e^{-\la\eta(t)}\phi(x)$$ with $\phi$ satisfying \eqref{H1},
and introduce the auxiliary function
$$G(t)=\int u_{t}\tilde{\psi} d v_{\gm}\ .$$
Without loss of generality, we suppose $T_{\ep}> 1$.
Then we claim that 
\beeq
\label{P-2}
G(t)\geq C\ep, \forall t\in [1, T_\ep),
\eneq
for some constant $C>0$.
With \eqref{P-2} in hand, we see that by H\"older's inequality
\beeq
\label{gaijin3}
C\ep\leq G(t)\les \Big(\int_{D}|u_{t}|^{p} d v_{\gm}\Big)^{1/p}\Big(\int_{D}\tilde{\psi}^{p'} d v_{\gm}\Big)^{1/p'},
\eneq
Recall that by Lemma \ref{lemma3.1}, we have
$$\int_{D}\tilde{\psi}^{p'} d v_{\gm} \les (1+t)^{n-1-\frac{n-1}{2}p'},
$$
which, recalling
 \eqref{P-11},
 yields 
\beeq
\label{gaijin1}
F''\gtrsim
\int |u_t|^pdv_\gm
 \gtrsim \ep^{p}(1+t)^{-(n-1)(p-2)/2}, t\geq 1\ .
\eneq

Notice that, if  $(n-1)(p-2)/2\leq 1$, that is $p\leq \frac{2n}{n-1}$ in the condition \eqref{mixed-region1}, then heuristically, we can integrate the above expression twice and obtain
\beeq
\label{ODE-2}
F \gtrsim \ep^{p}(1+t)^{2-(n-1)(p-2)/2}\ , t\geq 1\ ,
\eneq
which could be improved with the help of \eqref{ODE-1}, if 
$$\Big(2-\frac{(n-1)(p-2)}{2}\Big)q-n(q-1)+2>2-\frac{(n-1)(p-2)}{2}\ .$$
This is exactly the first condition of \eqref{mixed-region1} on $p,q$. 
Then the classical Kato type lemma (see, e.g., \cite[Lemma 2.1]{HZ2014})
could be applied to prove blow up results and 
the desired upper bound of lifespan \eqref{mixed-lifespan1}.

 
It remains to prove the claim \eqref{P-2}. Let $H(t)=\int u(t, x)\phi(x) d v_{\gm}$, then 
\beeq
\label{11-2}
G(t)=e^{-\la\eta(t)}H'(t),
\eneq
and $H$ satisfies
$$H''=\int u_{tt}\phi d v_{\gm}\geq \tilde{m}^2\int \Delta_{\gm}u  \phi d v_{\gm}\geq \tilde{m}^2\la^2H\ .$$
Let $y$ be the solution of ordinary differential equation
$$y''-\la^2\tilde{m}^2y=0, \ y(0)=H(0)/\ep=C_0 , y'(0)=H'(0)/\ep=C_1 \ ,$$
for some $C_{0}, C_{1}\ge 0$ with $C_{0}+C_{1}>0$. We observe that $H'\geq \ep y'$. 

For $y$, it is clear from continuity that we have
$y,  y'> 0$ for all $t> 0$.
More precisely, we have
$y(t)\ge C_{0}$, $y''\ge \de_1^2\la^2 C_{0}$ for all $t\ge 0$,
and so \beeq\label{eq-lb4}
y'\ge C_{1}+t\de_1^2\la^2 C_{0}
>0\ ,\ \forall t> 0\ .\eneq
Let $Y(t)=(Y_{1}(t), Y_{2}(t))^{T}$ and $Y_{1}=y$, $Y_{2}=y'$, then we have 
$$Y'=(A+V(t))Y,
A=\left(\begin{array}{cc}0 & 1 \\ \lambda^{2}k^2 & 0\end{array}\right),
V(t)=\left(\begin{array}{cc}0 & 0 \\ \lambda^{2}(\tilde{m}^{2}(t)-k^2) & 0\end{array}\right),$$
where
$k=\tilde{m}(\infty)=\exp(\int_0^\infty b(t)dt)$.
Noticing that $V'\in L^1$ and $\lim_{t\to\infty} V(t)=0$, we could apply the Levinson theorem to the system. Then there exists $t_0\in [0,\infty)$ so that we have two independent solutions, which have the asymptotic form
as $t\to \infty$
$$Y_{\pm}(t)=
\left(\begin{array}{c}1+o(1) \\ 
\pm\lambda k+o(1)\end{array}\right)
e^{\pm\lambda\int^{t}_{t_0}\tilde{m}(\tau)d\tau}
$$
Thus, we have, for some $d_1, d_2$,
\beeq
y'=d_1(\la k+o(1))
e^{\lambda\int^{t}_{t_0}\tilde{m}(\tau)d\tau}
+d_2(-\la k+o(1))
e^{-\lambda\int^{t}_{t_0}\tilde{m}(\tau)d\tau}\eneq
as $t\to\infty$.
By \eqref{eq-lb4}, we know that
$y'\ge C_{1}+\de_1^2\la^2 C_{0}$
 for all $t\ge 1$, it is clear that $d_1>0$.
Then there exists some $T>1$ such that
\beeq
y'\simeq  d_1 
e^{\lambda\int^{t}_{t_0}\tilde{m}(\tau)d\tau}
\simeq  d_1 e^{\lambda \eta(t)}
\ ,\ \forall t\ge T\ . \eneq
So we have 
$$y'\geq ce^{\lambda \eta(t)}, \forall t\geq 1,$$
for some $C>0$.

In conclusion, 
we obtain
$$H'\ge \ep y'\ge c\ep e^{\lambda \eta(t)}\ , \forall t\ge 1\ .$$
which completes the proof of
 \eqref{P-2}.

\bibliographystyle{plain1}

\end{document}